%
%


\documentclass{amsproc}
\usepackage{amsmath}
\usepackage{amssymb}
\input xy
\xyoption{all}




\newtheorem{theorem}{Theorem}[section]

\theoremstyle{definition}

\newtheorem{example}[theorem]{Example}
\newtheorem{Lemma}[theorem]{Lemma}
\newtheorem{corollary}[theorem]{Corollary}
\newtheorem{Prop}[theorem]{Proposition}
\theoremstyle{remark}
\newtheorem{remark}[theorem]{Remark}

\numberwithin{equation}{section}
\newcommand{\Cal}[1]{{\mathcal #1}}
\newcommand{\codim}{\mbox{\rm codim}}
\newcommand{\End}{\operatorname{End}}
\newcommand{\Hom}{\operatorname{Hom}}
\DeclareMathOperator{\Ob}{Ob}
\newcommand{\Mod}{\operatorname{Mod-\!}}
\newcommand{\add}{\mbox{\rm add}}
\newcommand{\Z}{\mathbb{Z}}
\newcommand{\Q}{\mathbb{Q}}
\newcommand{\cmat}{\left(\begin{array}}
\newcommand{\fmat}{\end{array}\right)}
\newcommand{\soc}{\mbox{\rm soc}}

\begin{document}

 \title[On a category of extensions with at most four maximal ideals]{On a category of extensions whose endomorphism rings have at most four maximal ideals}
  

 \author{Federico Campanini}
\address{Dipartimento di Matematica, Universit\`a di Padova, 35121 Padova, Italy}
\email{federico.campanini@math.unipd.it, facchini@math.unipd.it}
 
\author{Alberto Facchini}
\thanks{Partially supported by Dipartimento di Matematica ``Tullio Levi-Civita'' of Universit\`a di Padova (Project BIRD163492/16 ``Categorical homological methods in the study of algebraic structures'' and Research program DOR1690814 ``Anelli e categorie di moduli'').}

\subjclass[2010]{Primary 16D70, 16D80.}


\begin{abstract} We describe the endomorphism ring of a short exact sequences $\xymatrix@1{ 0 \ar[r] &A_R \ar[r]&B_R \ar[r] &C_R \ar[r] &0}$ with $A_R$ and $C_R$ uniserial modules and the behavior of these short exact sequences as far as their direct sums are concerned.
\end{abstract}

\maketitle

\section{Introduction}

Two right modules $A_R$ and $B_R$ are said to have: (1) the same  {\em monogeny class}, denoted
by $[A_R]_m=[B_R]_m$, if there exist a monomorphism $A_R\rightarrow B_R$ and
a monomorphism $B_R\rightarrow A_R$; (2)  the same {\em epigeny class}, denoted
by $[A_R]_e=[B_R]_e$, if there exist an epimorphism $A_R\rightarrow B_R$ and
an epimorphism $B_R\rightarrow A_R$. A module $A_R$ is {\em uniserial} if  the lattice of its
submodules is linearly ordered under inclusion.
That is, if,
for any submodules $V$ and $W$ of $A_R$, either $V\subseteq W$ or $W\subseteq V$. Finite direct sums of uniserial modules are classified via their monogeny class and their epigeny class \cite{TAMS}. These two invariants are sufficient because the endomorphism ring of a uniserial module has at most two maximal ideals. Other classes of modules whose endomorphism rings have at most two maximal ideals have similar behaviours \cite{AAF1, Tufan, FG}.

In this article, we focus our attention to short exact sequences $$\xymatrix@1{ 0 \ar[r] &A_R \ar[r]&B_R \ar[r] &C_R \ar[r] &0}$$ of right $R$-modules. They form the class of objects of a category $\Cal E$, which is additive, but is not abelian. If we consider the objects of $\Cal E$ for which $A_R$ and $C_R$ are uniserial $R$-modules, we find a behaviour very similar to that of uniserial modules mentioned in the previous paragraph. In this case, we have that finite direct sums of short exact sequences with $A_R$ and $C_R$ uniserial modules are classified via four invariants, which are the natural generalizations of monogeny class and epigeny class. Four invariants are necessary now because the endomorphism ring in $\Cal E$ of a short exact sequence with $A_R$ and $C_R$ uniserial modules has at most four maximal ideals. 

The main results in this paper are Theorem~\ref{EARY}, in which we describe the endomorphism ring of a short exact sequence $\xymatrix@1{ 0 \ar[r] &A_R \ar[r]&B_R \ar[r] &C_R \ar[r] &0}$ with $A_R$ and $C_R$ uniserial modules, and Theorem~\ref{completo'}, in which we describe the behaviour of the short exact sequences with $A_R$ and $C_R$ uniserial  as far as their direct-sum decompositions in $\Cal E$ are concerned.

In this article, modules are right unitary modules. 

\section{Modules with a semilocal endomorphism ring}

Let $R$ be a ring, associative and with an identity $1$. We consider the following category $\Cal E$ of extensions. The objects of $\Cal E$ are the short exact sequences $$\xymatrix@1{ 0 \ar[r] &A_R \ar[r]^\alpha &B_R \ar[r]^\beta &C_R \ar[r] &0, }$$where $A_R,B_R,C_R$ are right $R$-modules. A morphism in $\Cal E$ between two such exact sequences
\begin{equation}\label{1}
\xymatrix@1{ 0 \ar[r] &A_R \ar[r]^\alpha &B_R \ar[r]^\beta &C_R \ar[r] &0}
\end{equation}
and
\begin{equation}\label{2}
\xymatrix@1{ 0 \ar[r] &A'_R \ar[r]^{\alpha'} &B'_R \ar[r]^{\beta'} &C'_R \ar[r] &0}
\end{equation}
is a right $R$-module morphism $f\colon B_R\to B_R'$ that induces a commutative diagram 
\begin{equation*}
  \label{D}
  \xymatrix{
 0\ar[r]&   A_R \ar[r]^\alpha \ar[d]_{f|^A_{A'}} &B_R \ar[r]^\beta \ar[d]_{f} &C_R \ar[r] \ar[d]^{\overline{f}} &0 \\
0\ar[r]&   A_R' \ar[r]_{\alpha'} &B'_R \ar[r]_{\beta'} &C'_R \ar[r] &0.
  }
\end{equation*}
Equivalently, $f\colon B_R\to B_R'$ is a right $R$-module morphism such that $f(\alpha(A_R))\subseteq \alpha'(A_R')$. We will denote by $E_B$ the endomorphism ring of the object
\begin{equation}\label{2VLH}\xymatrix@1{ 0 \ar[r] &A_R \ar[r]^\alpha &B_R \ar[r]^\beta &C_R \ar[r] &0}
\end{equation}
in the category $\Cal E$. Notice that in this notation $E_B$ for the endomorphism ring of the exact sequence (\ref{2VLH}), there is a slight abuse of notation, because we assume that the modules $A_R$ and $C_R$ and the morphisms $\alpha$ and $\beta$ are fixed once for all, which will be true in the rest of this article.

Clearly, the zero object of the category $\Cal E$ is the exact sequence
$$
\xymatrix@1{ 0 \ar[r] &0 \ar[r] &0 \ar[r] &0 \ar[r] &0}.
$$
It will be denoted by $0$.

If (\ref{1}) and (\ref{2}) are two objects of $\Cal E$ and $f\colon B_R\to B_R'$ is a morphism in $\Cal E$, that is, a right $R$-module morphism with $f(\alpha(A_R))\subseteq \alpha'(A_R')$, then $f$ is an isomorphism in $\Cal E$ if and only if $f\colon B_R\to B_R'$ is a right $R$-module isomorphism and $f(\alpha(A_R))= \alpha'(A_R')$. Also, the category $\Cal E$ is not an abelian category. In fact, it is easy to construct  a morphism $f$ in $\Cal E$ for which $f\colon B_R\to B_R'$ is a right $R$-module isomorphism and $f(\alpha(A_R))\subset \alpha'(A_R')$. Such a morphism  $f$ in $\Cal E$ is clearly both right cancellable and left cancellable, that is, is both monic and epi, but is not an isomorphism in $\Cal E$. Thus $\Cal E$  is not abelian.

\medskip

A ring $R$ is {\em semilocal} if the factor ring $R/J(R)$, where $J(R)$ denotes the Jacobson radical of $R$, is a semisimple artinian ring. A ring $R$ is semilocal if and only if its dual dimension $\codim(R)$ is finite \cite[Proposition~2.43]{libro}. If $R$ and $S$ are rings, a ring morphism $\varphi \colon R \to S$
is  {\em local} if, for every $r\in R$, $\varphi (r)$ invertible in $S$ implies $r$ invertible in $R$ \cite{CD}. 

\begin{Prop}\label{localand codim} \cite[Corollary 2]{CD}
If $R\to S$ is a local morphism between two rings $R$ and $S$, then $\codim(R)\le \codim(S)$.
\end{Prop}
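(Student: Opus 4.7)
My plan is to reduce the inequality to a statement about the Jacobson radicals and then invoke the Camps--Dicks functional characterization of semilocality.

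First, I would establish the key inclusion $\varphi^{-1}(J(S))\subseteq J(R)$. Indeed, if $r\in R$ satisfies $\varphi(r)\in J(S)$, then for every $r'\in R$ the element $\varphi(1-rr')=1-\varphi(r)\varphi(r')$ is invertible in $S$; locality of $\varphi$ forces $1-rr'$ to be invertible in $R$, so $r\in J(R)$.

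Since an element of $S$ is invertible whenever its class in $S/J(S)$ is, the composite $R\xrightarrow{\varphi}S\twoheadrightarrow S/J(S)$ remains a local morphism, with kernel exactly $\varphi^{-1}(J(S))$. This yields an injective local ring morphism $\bar\varphi\colon R/\varphi^{-1}(J(S))\hookrightarrow S/J(S)$. Because $\varphi^{-1}(J(S))\subseteq J(R)$, one checks that $J(R/\varphi^{-1}(J(S)))=J(R)/\varphi^{-1}(J(S))$, so $\codim(R/\varphi^{-1}(J(S)))=\codim(R)$. Thus we may reduce to the situation in which the codomain is semisimple artinian and $\bar\varphi$ is an injective local morphism.

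In this reduced setting, I would invoke the functional characterization of semilocality from \cite{CD}: $R$ is semilocal with $\codim(R)\le n$ if and only if there exists a map $d\colon R\to\{0,1,\dots,n\}$ which is subadditive with respect to multiplication, satisfies $d(1-ab)=d(1-ba)$, and vanishes exactly on the invertible elements of $R$. Starting from such a $d_S$ on $S$ realizing $\codim(S)$, the pullback $d_R(r):=d_S(\varphi(r))$ inherits subadditivity and the symmetric $(1-ab,1-ba)$-condition; crucially, locality of $\varphi$ is precisely what ensures that $d_R(r)=0$ iff $r$ is invertible in $R$. Hence $d_R$ is admissible for $R$ with values bounded by $\codim(S)$, giving $\codim(R)\le\codim(S)$.

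The main obstacle is lining up the exact Camps--Dicks characterization so that all axioms transfer under pullback; the locality hypothesis is exactly what makes the non-invertibility axiom transfer, and it is indispensable (mere injectivity fails for classical embeddings such as $\Z\hookrightarrow\Q$, where the target is a field but the source is not semilocal).
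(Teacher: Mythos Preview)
The paper supplies no proof here; the statement is quoted directly from \cite[Corollary~2]{CD}. Your overall strategy---pull back a Camps--Dicks ``dimension function'' $d_S$ along the local morphism $\varphi$---is precisely how Camps and Dicks themselves deduce their Corollary~2 from their main characterization theorem, so in spirit you are reproducing the argument of the cited source.

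There is, however, a genuine gap in the axioms you list for $d$. Subadditivity under multiplication together with $d(1-ab)=d(1-ba)$ and $d^{-1}(0)=R^{\times}$ is \emph{not} enough to force semilocality: on $\Z$, take $d(n)=0$ for $n=\pm1$ and $d(n)=1$ otherwise; all three of your conditions hold, yet $\Z$ is not semilocal. The actual Camps--Dicks axiom is the equality $d(a-aba)=d(a)+d(1-ba)$ for all $a,b$ (which your $\Z$ example violates, e.g.\ at $a=2$, $b=3$, where the left side is $d(-10)=1$ and the right side is $d(2)+d(-5)=2$). Since this condition, like yours, is expressed purely through the ring operations, it too transfers along any ring homomorphism, and locality of $\varphi$ still handles the unit-reflecting axiom. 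So once you substitute the correct axiom your pullback argument goes through verbatim; the preliminary reduction to $S/J(S)$ in your first two paragraphs is correct but then becomes unnecessary.
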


Recall that a ring $S$ {\em has type $n$} \cite{AlbPav4}, if the factor ring $S/J(S)$ is a direct product of $n$ division rings, and that $S$ is a {\em ring of finite type} if it has type $n$ for some integer $n\ge 1$. If a ring $S$ has finite type, then the type $n$ of $S$ coincides with the dual Goldie dimension $\codim(S)$ of $S$ \cite[Proposition~2.43]{libro}. A module $M_R$ has {\em type $n$} ({\em finite type}) if its endomorphism ring $\End(M_R)$ is a ring of type $n$ (of finite type, respectively). The zero module is the unique module of type $0$. 

\begin{Prop}\label{Type} \cite[Proposition~2.1]{AlbPav4}
Let $S$ be a ring and $n\ge1$ be an integer. The following conditions are equivalent:
\begin{enumerate}
\item[{\rm (a)}]  The ring $S$ has type $n$.
\item[{\rm (b)}]  $n$ is the smallest of the positive integers $m$ for which there is a local morphism of $S$ into a direct product of $m$ division rings.
\item[{\rm (c)}]  The ring $S$ has exactly $n$ distinct maximal right ideals, and they are all two-sided ideals in $S$.
\item[{\rm (d)}]  The ring $S$ has exactly $n$ distinct maximal left ideals, and they are all two-sided ideals in $S$.
\end{enumerate}
\end{Prop}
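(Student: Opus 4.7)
My plan is to prove the equivalences in two main steps: the pair $(a)\Leftrightarrow(c)$ — with $(a)\Leftrightarrow(d)$ following by the left-right symmetric argument — which is essentially ideal-theoretic, and the pair $(a)\Leftrightarrow(b)$, which uses Proposition~\ref{localand codim}.

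For $(a)\Leftrightarrow(c)$, I would use the standard lattice bijection between (right, left, two-sided) ideals of $S$ containing $J(S)$ and (right, left, two-sided) ideals of $S/J(S)$, reducing to the corresponding question inside $S/J(S)$. Under (a), $S/J(S)\cong D_1\times\cdots\times D_n$, whose maximal right ideals are precisely the $n$ coordinate hyperplanes, all two-sided; this gives (c). Conversely, under (c), with $M_1,\ldots,M_n$ the (two-sided) maximal right ideals, $J(S)=\bigcap_i M_i$ and the $M_i$ are pairwise comaximal, so the Chinese Remainder Theorem yields $S/J(S)\cong\prod_i S/M_i$. Each $S/M_i$ is simple (since $M_i$ is maximal as a two-sided ideal) and has no proper nonzero right ideal (since $M_i$ is maximal as a right ideal), hence is a division ring.

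For $(a)\Rightarrow(b)$, the canonical projection $S\to S/J(S)\cong D_1\times\cdots\times D_n$ is local because invertibility lifts modulo $J(S)$, exhibiting a local morphism into a product of $n$ division rings; minimality follows from Proposition~\ref{localand codim}, since any local morphism into $m$ division rings forces $\codim(S)\le m$, while $\codim(S)=n$ under (a). For $(b)\Rightarrow(a)$, given a local morphism $\varphi\colon S\to T=D_1\times\cdots\times D_n$ with $n$ minimal, Proposition~\ref{localand codim} gives $\codim(S)\le n$, so $S$ is semilocal and $S/J(S)\cong\prod_{i=1}^s M_{k_i}(E_i)$ with $\sum_i k_i=\codim(S)\le n$. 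If all $k_i=1$, then $S/J(S)$ is itself a product of $s$ division rings, and the canonical projection yields a local morphism into $s$ division rings, forcing $s\ge n$ by minimality and $s\le n$ by Proposition~\ref{localand codim}, hence $s=n$ and (a) holds. Excluding the case in which some $k_i\ge 2$ is where I expect the main obstacle.

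The difficulty is that $\varphi$ need not factor through $S/J(S)$: one has only $\ker\varphi\subseteq J(S)$, by applying locality to $\varphi(1-sr)=\varphi(1-rs)=1$ for $r\in\ker\varphi$ and arbitrary $s$. The natural obstruction to exclude matrix factors is that $M_k(E)$ with $k\ge 2$ admits no unital ring homomorphism into a finite product of division rings — each coordinate-projected homomorphism is zero or injective by simplicity of $M_k(E)$, and injection into a division ring fails because $M_k(E)$ has nonzero zero-divisors — but transferring this obstruction requires working with the image $\varphi(S)$ as a subring of $T$ that surjects onto $S/J(S)$, rather than with $\varphi$ itself factoring through the quotient.
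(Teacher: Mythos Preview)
The paper does not prove this proposition; it is quoted verbatim from \cite[Proposition~2.1]{AlbPav4} and used as a black box. So there is no in-paper argument to compare against, and your proposal should be judged on its own.

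Your treatments of $(a)\Leftrightarrow(c)$, $(a)\Leftrightarrow(d)$, and $(a)\Rightarrow(b)$ are correct and standard. The genuine gap is exactly where you flag it, in $(b)\Rightarrow(a)$: you reach $S/J(S)\cong\prod_{i=1}^s M_{k_i}(E_i)$ with $\sum k_i\le n$ and then need to exclude $k_i\ge 2$, but the route you sketch---passing to the image $\varphi(S)\subseteq T$ and using that $\varphi(S)$ surjects onto $S/J(S)$---does not close the gap by itself. The subring $\varphi(S)$ is reduced (since $T$ is), but quotients of reduced rings need not be reduced, so the mere existence of a surjection $\varphi(S)\twoheadrightarrow S/J(S)$ does not prevent matrix factors in $S/J(S)$.

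The clean fix, entirely in the spirit of the paper (cf.\ Remark~\ref{rem} and the proof of Theorem~\ref{EARY}), is to pull back the maximal ideals of $T$. Let $N_i\subseteq T=D_1\times\cdots\times D_n$ be the $i$-th coordinate hyperplane and set $P_i:=\varphi^{-1}(N_i)$. Each $P_i$ is a two-sided completely prime ideal of $S$, since $S/P_i$ embeds in the division ring $D_i$. Because $\varphi$ is local, the set of non-units of $S$ is exactly $\bigcup_{i=1}^n P_i$. Hence every proper right ideal of $S$ lies in $\bigcup_i P_i$, and by Remark~\ref{rem} it lies in a single $P_i$. In particular every maximal right ideal of $S$ equals some $P_i$, so $S$ has at most $n$ maximal right ideals, all two-sided; the Chinese Remainder Theorem then gives $S/J(S)\cong\prod_{j=1}^s S/M_j$ with each $S/M_j$ a division ring and $s\le n$. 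The canonical projection $S\to S/J(S)$ is a local morphism into a product of $s$ division rings, so minimality of $n$ forces $s\ge n$, hence $s=n$ and $S$ has type $n$. This replaces your attempted ``no homomorphism $M_k(E)\to\prod D_i$'' obstruction with a direct structural argument on the maximal one-sided ideals of $S$.
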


\begin{Prop}\label{semilocal}
Let $\xymatrix@1{ 0 \ar[r] &A_R \ar[r]^\alpha &B_R \ar[r]^\beta &C_R \ar[r] &0 }$ be a short exact sequence of right $R$-modules. Then:
\begin{enumerate}
\item[{\rm (a)}]   $\codim(E_B)\le\codim(\End(A_R))+\codim(\End(C_R))$. 
\item[{\rm (b)}]   If $A_R$ and $C_R$ are modules whose endomorphism ring is semilocal, then the endomorphism ring $E_B$ of the short exact sequence in the category $\Cal E$ is also a semilocal ring.
\item[{\rm (c)}]    If $A_R$ and $C_R$ are modules of type $n$ and $m$ respectively, then the endomorphism ring $E_B$ of the short exact sequence is of type $\le n+m$.
\end{enumerate}
\end{Prop}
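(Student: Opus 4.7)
The plan is to build a single natural ring homomorphism
\[
 \varphi\colon E_B\longrightarrow \End(A_R)\times\End(C_R),\qquad f\longmapsto (f|^A_A,\overline{f}),
\]
and show it is a local morphism. Once locality is established, part (a) follows immediately from Proposition~\ref{localand codim} together with the additivity $\codim(R\times S)=\codim(R)+\codim(S)$. Part (b) is an immediate consequence of (a) (a ring is semilocal precisely when it has finite dual Goldie dimension). Part (c) reduces, via Proposition~\ref{Type}(b), to composing $\varphi$ with local morphisms $\End(A_R)\to D_1\times\cdots\times D_n$ and $\End(C_R)\to D'_1\times\cdots\times D'_m$ into products of division rings, which yields a local morphism of $E_B$ into a product of $n+m$ division rings.

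First I would verify that $\varphi$ is well-defined and multiplicative. Given $f\in E_B$, the inclusion $f(\alpha(A_R))\subseteq\alpha(A_R)$ gives (by injectivity of $\alpha$) a unique endomorphism $f|^A_A\in\End(A_R)$ and, passing to the cokernel, a unique $\overline{f}\in\End(C_R)$; compatibility with composition and with $1_B$ is routine from the commutative diagram in the definition of a morphism of $\Cal E$.

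The heart of the argument is the locality of $\varphi$. Assume $\varphi(f)=(f|^A_A,\overline{f})$ is invertible in $\End(A_R)\times\End(C_R)$, i.e.\ both $f|^A_A$ and $\overline{f}$ are automorphisms. Applying the Five Lemma to the commutative diagram
\[
  \xymatrix{
 0\ar[r]&   A_R \ar[r]^\alpha \ar[d]_{f|^A_{A}} &B_R \ar[r]^\beta \ar[d]_{f} &C_R \ar[r] \ar[d]^{\overline{f}} &0 \\
0\ar[r]&   A_R \ar[r]_{\alpha} &B_R \ar[r]_{\beta} &C_R \ar[r] &0
  }
\]
shows that $f\colon B_R\to B_R$ is an $R$-module isomorphism. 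Moreover, since $f|^A_A$ is surjective, $f(\alpha(A_R))=\alpha(f|^A_A(A_R))=\alpha(A_R)$, so applying $f^{-1}$ yields $f^{-1}(\alpha(A_R))=\alpha(A_R)$ as well. Thus $f^{-1}$ is also a morphism of $\Cal E$, and $f$ is an automorphism in $\Cal E$, i.e.\ invertible in $E_B$. This is exactly the local condition.

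The only delicate point is the step ``$f|^A_A$ and $\overline{f}$ automorphisms implies $f$ an automorphism'', and this is precisely where the Five Lemma is used; everything else is bookkeeping. With locality in hand, (a) follows from Proposition~\ref{localand codim}, (b) is immediate, and for (c) one observes that the composition of local morphisms is local, so pre-composing with $\varphi$ a local morphism from $\End(A_R)\times\End(C_R)$ into a product of $n+m$ division rings (obtained by taking the product of the local morphisms supplied by Proposition~\ref{Type}(b) for $\End(A_R)$ and $\End(C_R)$) exhibits $E_B$ as admitting a local morphism into such a product, whence its type is at most $n+m$ by another application of Proposition~\ref{Type}(b).
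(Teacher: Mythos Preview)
Your proof is correct and follows essentially the same route as the paper: build the ring morphism $\varphi\colon E_B\to\End(A_R)\times\End(C_R)$, $f\mapsto(f|^A_A,\overline f)$, prove it is local, and deduce (a)--(c) via Proposition~\ref{localand codim} and Proposition~\ref{Type}(b). The paper invokes the Snake Lemma where you invoke the Five Lemma, and it omits the explicit check that $f^{-1}\in E_B$ (i.e.\ that $f(\alpha(A_R))=\alpha(A_R)$), but otherwise the arguments coincide.
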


\begin{proof} Without loss of generality, we can suppose that $A_R$ is a submodule of $B_R$, $\alpha\colon A_R\to B_R$ is the embedding, $C_R=B_R/A_R$ and $\beta$ is the canonical projection. There is a ring morphism $\varphi\colon E_B\to\End(A_R)\times\End(B_R/A_R)$ defined by $f\mapsto (f|^A_{A},\overline{f})$, which is a local morphism by the Snake Lemma. Now (a) follows from Proposition~\ref{localand codim}, and (b) follows from the fact that a ring is semilocal if and only if its dual Goldie dimension is finite \cite[Proposition~2.43]{libro}.

As far as (c) is concerned, suppose that $A_R$ and $C_R$ are modules of type $n$ and $m$ respectively. By  Proposition~\ref{Type}(b), there are local morphisms  $\End(A_R)\to D_1\times\dots\times D_n$ and $\End(B_R/A_R)\to D'_1\times\dots\times D'_m$ for suitable division rings $D_1,\dots, D_n,D'_1,\dots, D'_m$. Composing with $\varphi$, we get a local morphism $E_B\to D_1\times\dots\times D_n\times  D'_1\times\dots\times D'_m$. Thus $E_B$ has type $\le n+m$ by Proposition~\ref{Type}(b).\end{proof}

\begin{remark}\label{rem'} In the particular case where either $A_R$ is zero or $C_R$ is zero, their endomorphism ring is the trivial ring with one element. If we define such a ring to be of dual Goldie dimension $0$ and we consider it to be a semilocal ring of finite type $0$, then Proposition~\ref{semilocal} also holds in this case.
\end{remark}

\section{Uniserial modules}\label{UM}

The endomorphism ring $E:=\End(U_R)$ of a non-zero uniserial module $U_R$ \cite[Theorem~1.2]{TAMS} has at most two maximal right ideals: the two-sided completely prime ideals  $I_U\colon=\{\,f\in E \mid f$ is not  injective$\,\}$ and
$K_U\colon=\{\,f\in E \mid f$ is not  surjective$\,\}$, or only one of them. Here, a {\em completely prime ideal} $P$ of a ring $S$ is a proper ideal $P$ of~$S$ such that, for every $x,y\in S$, $xy\in P$ implies that either $x\in P$ or $y\in P$. 

\begin{remark}\label{rem} If $S$ is a ring, $P_1,\dots, P_n$ are completely prime two-sided ideals of $S$ and $L$ is a right ideal of $S$ contained in $\bigcup_{i=1}^n P_i$, then $L\subseteq P_i$ for some $i=1,2,\dots,n$. 
\end{remark}

\begin{theorem}\label{EARY} Let $A< B_R$  be right $R$-modules with $A_R\ne 0$. Suppose $A_R$ and $C_R\colon=B_R/A$ uniserial modules. 
Let $E_B$ be the endomorphism ring of the short exact sequence $$\xymatrix@1{ 0 \ar[r] &A_R \ar[r] &B_R \ar[r] &B_R/A \ar[r] &0 },$$ viewed as an object of the category $\Cal E$.  
Set 
\begin{align*} I_{B,m,l}&:=\{\,f\in E_B \mid \ker(f)\cap A\ne 0\,\},\\ I_{B,e,l}&:=\{\,f\in E_B \mid f(A)\subset A\,\},\\
I_{B,m,u}&:=\{\,f\in E_B \mid A\subset f^{-1}(A)\,\}\end{align*} and 
$$I_{B,e,u}:=\{\,f\in E_B \mid f(B)+A\subset B\,\}.$$
Then $I_{B,m,l},I_{B,e,l},I_{B,m,u}$ and $I_{B,e,u}$ are four two-sided completely prime ideals of $E_B$, and
every proper right ideal of $E_B$ and every proper left ideal of $E_B$ is contained in one of these four ideals of $E_B$.
Moreover, $E_B/J(E_B)$ is isomorphic to the direct product of $n$ division rings $D_1,\dots,D_n$ for some $n$ with $1\le n\le 4$, where $\{D_1,\dots,D_n\}\subseteq \{E_B/I_{B,m,l},E_B/I_{B,e,l},E_B/I_{B,m,u},E_B/I_{B,e,u}\}$.
\end{theorem}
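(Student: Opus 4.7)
The plan is to route everything through the local ring morphism $\varphi\colon E_B\to T:=\End(A_R)\times\End(C_R)$, $f\mapsto(f|^A_A,\overline f)$, introduced in the proof of Proposition~\ref{semilocal}. From Section~\ref{UM}, each factor of $T$ has at most two two-sided completely prime maximal ideals, namely the ideal of non-injective endomorphisms and the ideal of non-surjective ones; write these as $I_A,K_A$ in $\End(A_R)$ and $I_C,K_C$ in $\End(C_R)$. A short unravelling of definitions identifies the four ideals in the statement as $\varphi$-preimages:
\begin{align*}
I_{B,m,l}&=\varphi^{-1}(I_A\times\End(C_R)), & I_{B,e,l}&=\varphi^{-1}(K_A\times\End(C_R)),\\
I_{B,m,u}&=\varphi^{-1}(\End(A_R)\times I_C), & I_{B,e,u}&=\varphi^{-1}(\End(A_R)\times K_C).
\end{align*}
Since preimages under a ring morphism preserve being a two-sided ideal and being completely prime, each of the four sets is a two-sided completely prime proper ideal of $E_B$ (properness holds because the identity restricts to $1_A$ and induces $1_C$).

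The core step is then the following observation. If $f\in E_B$ lies outside all four ideals, then $f|^A_A$ is bijective on $A_R$ and $\overline f$ is bijective on $C_R$, so $\varphi(f)$ is a unit in $T$; localness of $\varphi$ then forces $f$ to be a unit in $E_B$. Equivalently, every non-unit of $E_B$ sits in $I_{B,m,l}\cup I_{B,e,l}\cup I_{B,m,u}\cup I_{B,e,u}$. Any proper right ideal $L$ consists of non-units, so $L$ is contained in this union, and Remark~\ref{rem} promotes this to containment in a single $I_{B,*,*}$. The proof for proper left ideals is symmetric (the same argument in the opposite ring).

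From the previous step the maximal right (and, by symmetry, left) ideals of $E_B$ form a subset $\{M_1,\dots,M_n\}\subseteq\{I_{B,m,l},I_{B,e,l},I_{B,m,u},I_{B,e,u}\}$ with $1\le n\le 4$, and they are all two-sided. By Proposition~\ref{semilocal}(b), $E_B/J(E_B)$ is semisimple artinian, so each $E_B/M_i$ is simple artinian; since $M_i$ is completely prime the quotient is also a domain, hence a division ring $D_i$. Pairwise comaximality of the $M_i$ together with the Chinese Remainder Theorem then give $E_B/J(E_B)\cong D_1\times\cdots\times D_n$. The only non-routine piece of bookkeeping lies in the very first step, namely verifying the four set-theoretic reformulations as $\varphi$-preimages (in particular translating \emph{$\overline f$ is not surjective on $C_R$} into $f(B)+A\subsetneq B$, and similarly for the other three); once that is in place, the local-morphism argument and Remark~\ref{rem} do all the heavy lifting.
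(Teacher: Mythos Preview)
Your proof is correct and follows essentially the same approach as the paper: both route everything through the local morphism $\varphi\colon E_B\to\End(A_R)\times\End(C_R)$, identify the four ideals as preimages (equivalently, kernels of the composites $E_B\to\End(A_R)/I_A$, etc.), use localness of $\varphi$ to show non-units lie in the union, and then invoke Remark~\ref{rem} and the Chinese Remainder Theorem. The only cosmetic difference is that the paper first splits into the three cases $I_A\subseteq K_A$, $K_A\subseteq I_A$, $I_A\nsubseteq K_A$ and $K_A\nsubseteq I_A$ to exhibit an explicit local morphism into a product of division rings, whereas you go directly to the preimage description; the content is the same.
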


\begin{proof} As in the proof of Proposition~\ref{semilocal}, there is a canonical local morphism $\varphi\colon E_B\to\End(A_R)\times\End(B_R/A_R)$. Since $A_R$ is a uniserial module, the canonical projections define a local morphism $\End(A_R)\to \End(A_R)/I_A\times \End(A_R)/K_A$, where $\End(A_R)/I_A$ and $\End(A_R)/K_A$ are integral domains. More precisely, three cases can occur:

(1) $I_A\subseteq K_A$. In this case, $\End(A_R)$ is a local ring with maximal ideal $K_A$, $\End(A_R)/K_A$ is a division ring, and the canonical projection $$\End(A_R)\to\End(A_R)/ K_A$$ is a local morphism. 

(2) $K_A\subseteq I_A$. In this case, $\End(A_R)$ is a local ring with maximal ideal $I_A$, $\End(A_R)/I_A$ is a division ring, and the canonical projection $$\End(A_R)\to \End(A_R)/I_A$$ is a local morphism. 

(3) $I_A\nsubseteq K_A$ and $K_A\nsubseteq I_A$. In this case, $\End(A_R)$ has two maximal right ideal $I_A$ and $K_A$, which are two-sided ideals, $\End(A_R)/I_A$ and $\End(A_R)/K_A$ are two division rings, and the canonical mapping $$\End(A_R)\to \End(A_R)/I_A\times \End(A_R)/K_A$$ is a local morphism. 

Similarly for the uniserial module $C_R:= B_R/A$. Thus if $D_1,\dots,D_n$ are those $n$ rings between the rings $\End(A_R)/I_A$, $\End(A_R)/K_A$, $\End(C_R)/I_C$ and $\End(C_R)/K_C$ that are division rings, we get a local morphism $E_B\to D_1\times\dots\times D_n$ of $E_B$ into $n\le 4$ division rings. The four ideals $I_{B,m,l},I_{B,e,l},I_{B,m,u}$ and $I_{B,e,u}$ of $E_B$ are the kernels of the canonical morphisms $E_B\to \End(A_R)/I_A$, $E_B\to \End(A_R)/K_A$, $E_B\to \End(C_R)/I_C$, $E_B\to \End(C_R)/K_C$, respectively. It is therefore immediate that they are completely prime two-sided ideals of $E_B$.

The non-invertible elements of $\End(A_R)$ are exactly the elements of $I_A\cup K_A$. Similarly, the non-invertible elements of $\End(C_R)$ are exactly the elements of $I_C\cup K_C$. Since $\varphi\colon E_B\to\End(A_R)\times\End(B_R/A_R)$ is a local morphism, the non-invertible elements of $E_B$ are exactly the elements of $I_{B,m,l}\cup I_{B,e,l}\cup I_{B,m,u}\cup I_{B,e,u}$. But these four ideals of $E_B$ are completely prime, so that every proper right (or left) ideal of $E_B$, which necessarily consists of non-invertible elements, must be contained in one of them (Remark~\ref{rem}). In particular, every maximal two-sided ideal of $E_B$ must be one of $I_{B,m,l},I_{B,e,l},I_{B,m,u}$ or $I_{B,e,u}$.

The last assertion follows by applying the Chinese Remainder Theorem.
\end{proof}

\begin{remark} Note that in Theorem~\ref{EARY}, without the assumption $0\neq A_R< B_R$, the sets $I_{B,m,l},\ I_{B,e,l},\ I_{B,m,u}$ and $I_{B,e,u}$ could be empty. For example, for $A_R=0$, we get that $I_{B,m,l}$ and $I_{B,e,l}$ are empty sets, while $I_{B,m,u}$ and $I_{B,e,u}$ coincide with the set of $R$-module morphisms of $B_R$ that are not injective or not surjective respectively. Similarly for $C_R=0$. Anyway, if $A$ or $C$ is equal to zero, $B$ turns out to be a uniserial module and the corresponding description of the endomorphism ring $E_B$ of $B$ is well known \cite[Theorem~1.2]{TAMS}. Of course, these are the only two cases in which one of the four sets is empty, because otherwise the zero morphism belongs to $I_{B,m,l},I_{B,e,l},I_{B,m,u}$ and $I_{B,e,u}$. Indeed, in the hypothesis of Theorem~\ref{EARY}, these four sets are always proper ideals of $E_B$.
\end{remark}

Let (\ref{1}) and (\ref{2}) be two objects of $\Cal E$. We will write:

\noindent
$[B]_{m,l}=[B']_{m,l}$ if there exist two morphisms $f\colon B_R\to B'_R$ and $g\colon B'_R\to B_R$ in the category $\Cal E$ such that $f|^A_{A'}$ and $g|^{A'}_{A}$ are injective right $R$-module morphisms.

\noindent
$[B]_{e,l}=[B']_{e,l}$ if there exist two morphisms $f\colon B_R\to B'_R$  and $g\colon B'_R\to B_R$  in the category $\Cal E$ such that $f(A)=A'$ and $g(A')=A$.

\noindent
$[B]_{m,u}=[B']_{m,u}$ if there exist two morphisms $f\colon B_R\to B'_R$  and $g\colon B'_R\to B_R$  in the category $\Cal E$ such that $\overline{f}\colon C\to C'$ and $\overline{g}\colon C'\to C$ are injective right $R$-module morphisms.

\noindent
$[B]_{e,u}=[B']_{e,u}$ if there exist two morphisms $f\colon B_R\to B'_R$  and $g\colon B'_R\to B_R$  in the category $\Cal E$ such that $\overline{f}\colon C\to C'$ and $\overline{g}\colon C'\to C$ are surjective right $R$-module morphisms.

\noindent
(Here the four letters $m,e,l,u$ stand for ``monogeny'', ``epigeny'', ``lower'' and ``upper'', respectively.)

If $[B]_{a,b}=[B']_{a,b}$ for some $a=m,e$ and $b=l,u$, we will say that the two objects (\ref{1}) and (\ref{2}) have the same {\em $(a,b)$-class}.

In this notation, $[B]_{m,l}=[0]_{m,l}$ if and only if $[B]_{e,l}=[0]_{e,l}$, if and only if $A_R$ is the zero module. Similarly, $[B]_{m,u}=[0]_{m,u}$ if and only if $[B]_{e,u}=[0]_{e,u}$, if and only if $C_R$ is the zero module.

\section{Isomorphism}

In this section, we will consider the full subcategory $\Cal U$ of the category $\Cal E$ whose elements are all short exact sequences $\xymatrix@1{ 0 \ar[r] &A_R \ar[r] &B_R \ar[r] &C_R \ar[r] &0 }$ with $A_R, C_R$ non-zero uniserial right $R$-modules. In particular, every element of $\Cal U$ has a semilocal endomorphism ring (Proposition~\ref{semilocal}(b)). 
 
\begin{remark}\label{1.1} Let {\rm (\ref{1})} and {\rm (\ref{2})} be two objects of $\Cal U$. By \cite[Lemma~1.1]{TAMS}, $[B]_{a,b}=[B']_{a,b}$ if and only if there exist two morphisms $f\colon B\to B'$ and $g\colon B' \to B$ in the category $\Cal U$ such that $g f \notin I_{B,a,b}$ (or, equivalently, such that $f g \notin I_{B',a,b}$), where $a=m,e$ and $b=l,u$.\end{remark}
 
\begin{Lemma} \label{4.2} Let {\rm (\ref{1})} and {\rm (\ref{2})} be two objects in the category $\Cal U$. Fix $a=m,e$ and $b=l,u$. Suppose that $[B]_{a,b}=[B']_{a,b}$. Then the following properties hold:
\begin{enumerate}
\item[{\rm (a)}]
For every $c=m,e$ and $d=l,u$, one has that $I_{B,c,d}\subseteq I_{B,a,b}$ if and only if $I_{B',c,d}\subseteq I_{B',a,b}$.

\item[{\rm (b)}]
For every $c=m,e$ and $d=l,u$, $I_{B,c,d}\subseteq I_{B,a,b}$ implies $[B]_{c,d}=[B']_{c,d}$.

\end{enumerate}
\end{Lemma}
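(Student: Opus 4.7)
The plan is to combine Remark~\ref{1.1} with the following composition fact for uniserial modules: if $X,Y,Z$ are non-zero uniserial right $R$-modules and $\alpha\colon X\to Y$, $\beta\colon Y\to Z$ are morphisms, then $\beta\alpha$ is injective (respectively surjective) if and only if both $\alpha$ and $\beta$ are. The non-trivial halves use uniseriality of $Y$: two non-zero submodules of $Y$ are comparable and hence meet non-trivially, and dually two proper submodules of $Y$ have a proper sum. Iterating this through two successive uniserial middles $Y,Y'$ yields the analogous statement for three-term composites $X\to Y\to Y'\to Z$.

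From this I extract two facts about morphisms $f\colon B\to B'$ and $g\colon B'\to B$ of $\Cal U$ and an endomorphism $h\in E_{B'}$, valid for every choice of $(c,d)\in\{m,e\}\times\{l,u\}$: \emph{(i)} if $h\in I_{B',c,d}$, then $ghf\in I_{B,c,d}$, because the failure of the $(c,d)$-component of $h$ (one of $h|^{A'}_{A'}$ or $\overline h\colon C'\to C'$) propagates through the triple composition by uniseriality of $A'$ or $C'$; \emph{(ii)} if $gf\notin I_{B,a,b}$ and $h\notin I_{B',a,b}$, then $ghf\notin I_{B,a,b}$, because $gf\notin I_{B,a,b}$ forces, via the two-term composition fact applied to $f|^A_{A'},g|^{A'}_A$ (if $b=l$) or $\overline f,\overline g$ (if $b=u$), both of these components to be injective (when $a=m$) or surjective (when $a=e$); combining this with the analogous property of $h$, the three-term composition fact delivers a good triple composition.

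Given (i) and (ii), part (b) is immediate. Assume $I_{B,c,d}\subseteq I_{B,a,b}$, use Remark~\ref{1.1} to pick $f,g$ with $gf\notin I_{B,a,b}$, deduce $gf\notin I_{B,c,d}$ from the inclusion, and apply Remark~\ref{1.1} once more to conclude $[B]_{c,d}=[B']_{c,d}$.

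For part (a), swapping $B$ with $B'$ and $f$ with $g$ makes the two implications formally symmetric, so I prove one direction by contrapositive. Suppose $I_{B',c,d}\not\subseteq I_{B',a,b}$ and pick $h\in I_{B',c,d}\setminus I_{B',a,b}$; choose $f\colon B\to B'$ and $g\colon B'\to B$ with $gf\notin I_{B,a,b}$ (and hence $fg\notin I_{B',a,b}$) using Remark~\ref{1.1}. Then (i) gives $ghf\in I_{B,c,d}$ while (ii) gives $ghf\notin I_{B,a,b}$, exhibiting a witness to $I_{B,c,d}\not\subseteq I_{B,a,b}$. The main obstacle I expect is bookkeeping: packaging (i) and (ii) so that the argument does not split into eight ($4\times 2$) repetitions but reduces once and for all to the uniserial composition fact above.
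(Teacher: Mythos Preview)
Your proof is correct and follows essentially the same route as the paper's. The only difference is packaging: the paper cites \cite[Lemma~1.1]{TAMS} for both your facts (i) and (ii), whereas you unpack that lemma explicitly as the uniserial composition fact and then derive (i) and (ii) from it; the contrapositive argument for (a) and the direct argument for (b) are otherwise identical to the paper's.
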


\begin{proof}
{\rm (a)}
It suffices to show that if $c=m,e$, $d=l,u$ and $I_{B,c,d}\subseteq I_{B,a,b}$, then $I_{B',c,d}\subseteq I_{B',a,b}$. Fix $c=m,e$ and $d=l,u$ and suppose that $I_{B',c,d}\nsubseteq  I_{B',a,b}$. Let $\varphi\colon B'_R\to B'_R$ be a morphism in $I_{B',c,d}$ not in $I_{B',a,b}$. Since $[B]_{a,b}=[B']_{a,b}$, there exist two morphisms $f\colon B\to B'$ and $g\colon B' \to B$ in the category $\Cal U$ such that $g f \notin I_{B,a,b}$. In particular, by \cite[Lemma~1.1]{TAMS}, $g\varphi f\in I_{B,c,d}$ and $g\varphi f \notin  I_{B,a,b}$, which implies that $I_{B,c,d}\nsubseteq  I_{B,a,b}$.

{\rm (b)} Suppose that $I_{B,c,d}\subseteq I_{B,a,b}$ for some $c=m,e$ and $d=l,u$. Since $[B]_{a,b}=[B']_{a,b}$, there exist two morphisms $f\colon B\to B'$ and $g\colon B' \to B$ in the category $\Cal U$ such that $g f \notin I_{B,a,b}$. In particular $g f \notin I_{B,c,d}$, and therefore $[B]_{c,d}=[B']_{c,d}$ by Remark \ref{1.1}.

\end{proof}

\begin{corollary}\label{4.3}
Let (\ref{1}) and (\ref{2}) be two objects of $\Cal U$ with $[B]_{a,b}=[B']_{a,b}$ for every $a=m,e$ and every $b=l,u$. Consider the sets $\Cal S_B\colon=\{I_{B,m,l},I_{B,e,l},I_{B,m,u},$ $I_{B,e,u}\}$ and $\Cal S_{B'}\colon=\{I_{B',m,l},I_{B',e,l},I_{B',m,u},I_{B',e,u}\}$ partially ordered by set inclusion. Then the canonical mapping $\Phi\colon \Cal S_B\to \Cal S_{B'}$, defined by 
$I_{B,a,b}\mapsto I_{B',a,b}$, is a partially ordered set isomorphism.
\end{corollary}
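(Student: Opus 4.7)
The plan is to deduce the statement as a fairly direct consequence of Lemma~\ref{4.2}(a), being careful about the fact that $\Cal S_B$ and $\Cal S_{B'}$ are sets of ideals (so repeated entries collapse), and the map $\Phi$ must first be shown to be well defined on these sets before its order-theoretic properties can be examined.

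First I would observe that the hypothesis $[B]_{a,b}=[B']_{a,b}$ for every $(a,b)\in\{m,e\}\times\{l,u\}$ is symmetric in $B$ and $B'$, so Lemma~\ref{4.2}(a) may be applied in both directions to give the biconditional
\[
I_{B,c,d}\subseteq I_{B,a,b}\ \Longleftrightarrow\ I_{B',c,d}\subseteq I_{B',a,b}
\]
for every choice of labels $(a,b)$ and $(c,d)$.

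Next I would use this biconditional, together with antisymmetry of set inclusion, to establish that $\Phi$ is well defined and injective: if $I_{B,a,b}=I_{B,c,d}$ then both inclusions $I_{B,a,b}\subseteq I_{B,c,d}$ and $I_{B,c,d}\subseteq I_{B,a,b}$ hold, so by the biconditional the same holds with $B$ replaced by $B'$, giving $I_{B',a,b}=I_{B',c,d}$. This shows the assignment $I_{B,a,b}\mapsto I_{B',a,b}$ depends only on the element of $\Cal S_B$ (not on the chosen labels), and the same argument run in the reverse direction gives injectivity. Surjectivity is immediate from the definition of $\Cal S_{B'}$.

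Finally, applying the biconditional once more shows that $\Phi$ both preserves and reflects the partial order $\subseteq$, so it is a poset isomorphism. The only slightly subtle step is the well-definedness, but this is handled by using Lemma~\ref{4.2}(a) in both directions; no independent arguments are needed.
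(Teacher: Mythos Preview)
Your proof is correct and follows the same approach as the paper, which simply states that the corollary ``immediately follows from Lemma~\ref{4.2}(a).'' You have in fact been more careful than the paper in verifying that $\Phi$ is well defined on the \emph{sets} $\Cal S_B$ and $\Cal S_{B'}$ (where distinct labels may yield the same ideal), but this extra care is entirely appropriate and does not constitute a different method.
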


\begin{proof}
It immediately follows from Lemma \ref{4.2} {\rm (a)}.
\end{proof}

\begin{Prop}\label{propiso}
Let (\ref{1}) and (\ref{2}) be two objects of $\Cal U$. Then (\ref{1}) and (\ref{2}) are isomorphic in $\Cal U$ if and only if $[B]_{a,b}=[B']_{a,b}$ for every $a=m,e$ and $b=l,u$.
\end{Prop}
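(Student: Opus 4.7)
The forward direction is immediate: any isomorphism $f\colon B\to B'$ in $\Cal U$ with inverse $g$ has $gf=1_B$, which lies outside every proper ideal of $E_B$, and in particular outside each $I_{B,a,b}$; hence Remark~\ref{1.1} yields all four class equalities.

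For the converse, the plan is to produce a \emph{single} pair of morphisms $f\colon B\to B'$ and $g\colon B'\to B$ in $\Cal U$ with $gf$ invertible in $E_B$, and then to exploit the indecomposability of the uniserial ends $A_R,C_R$ to upgrade $f$ itself to an isomorphism. Write $E_B/J(E_B)\cong D_1\times\cdots\times D_n$ as in Theorem~\ref{EARY}, where $D_i=E_B/\mathfrak{m}_i$ for maximal ideals $\mathfrak{m}_i\in\{I_{B,m,l},I_{B,e,l},I_{B,m,u},I_{B,e,u}\}$, and let $\bar e_i$ denote the associated central idempotents of $E_B/J(E_B)$.

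Set $T=\Hom_{\Cal U}(B,B')$ and $S=\Hom_{\Cal U}(B',B)$. The composition pairing descends to $\bar S\times\bar T\to E_B/J(E_B)$, where $\bar T=T/TJ(E_B)$ and $\bar S=S/J(E_B)S$ (since products of the form $(jg)\cdot f$ and $g\cdot(fj)$ with $j\in J(E_B)$ already lie in $J(E_B)$). Under the decompositions $\bar T=\bigoplus_i\bar T\bar e_i$ and $\bar S=\bigoplus_i\bar e_i\bar S$, this pairing is block-diagonal: $\bar e_i\bigl(E_B/J(E_B)\bigr)\bar e_j=0$ whenever $i\neq j$. For each $i$, Remark~\ref{1.1} applied to $[B]_{a_i,b_i}=[B']_{a_i,b_i}$ (with $\mathfrak{m}_i=I_{B,a_i,b_i}$) supplies $f_i\in T$ and $g_i\in S$ such that $\overline{g_if_i}$ is non-zero in the $i$-th coordinate, whence $\tilde g_i:=\bar e_i\bar g_i$ and $\tilde f_i:=\bar f_i\bar e_i$ satisfy $\tilde g_i\tilde f_i\neq 0$ in $D_i$. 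Summing, $\tilde f:=\sum_i\tilde f_i\in\bar T$ and $\tilde g:=\sum_i\tilde g_i\in\bar S$ give $\tilde g\tilde f=\sum_i\tilde g_i\tilde f_i$, a unit of $\prod_kD_k$. Lifting $\tilde f,\tilde g$ arbitrarily to $f\in T$ and $g\in S$, the image of $gf$ in $E_B/J(E_B)$ is $\tilde g\tilde f$, a unit; since $J(E_B)$ is the Jacobson radical of $E_B$, the element $gf$ itself is invertible in $E_B$.

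As $gf$ is a unit, $f$ is a split monomorphism in $\Cal U$ with retraction $(gf)^{-1}g$, so the short exact sequence $B'$ decomposes in $\Cal E$ as $B\oplus X$; in particular $A'_R\cong A_R\oplus A_X$ and $C'_R\cong C_R\oplus C_X$. Because $A'_R$ and $C'_R$ are non-zero uniserial, hence indecomposable, $R$-modules and $A_R,C_R\neq 0$, both $A_X$ and $C_X$ must vanish; then the short exact sequence representing $X$ collapses, $X=0$, and $f$ is an isomorphism in $\Cal U$. The main obstacle is the previous paragraph: turning the per-ideal witnesses of Remark~\ref{1.1} into a single pair $(f,g)$. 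The key device is the orthogonality of the central idempotents $\bar e_i$ of $E_B/J(E_B)$, which forces every unwanted cross-term $\tilde g_i\tilde f_j$ ($i\neq j$) to vanish modulo $J(E_B)$ and thus lets the products $\tilde g_i\tilde f_i$ survive independently in their respective factors $D_i$.
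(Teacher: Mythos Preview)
Your argument is correct, and the overall strategy of combining the per-ideal witnesses via orthogonal idempotents of $E_B/J(E_B)$ is the same device used in the paper. The point of departure is in how you finish. The paper works \emph{symmetrically}: it first invokes Corollary~\ref{4.3} to match the maximal ideals of $E_B$ with those of $E_{B'}$, then lifts orthogonal idempotents $\varepsilon_{(a,b)}\in E_B$ \emph{and} $\varepsilon'_{(a,b)}\in E_{B'}$, sets $\varphi=\sum\varepsilon'_{(a,b)}f_{(a,b)}\varepsilon_{(a,b)}$ and $\psi=\sum\varepsilon_{(a,b)}f'_{(a,b)}\varepsilon'_{(a,b)}$, and checks that \emph{both} $\psi\varphi$ and $\varphi\psi$ are units modulo the respective Jacobson radicals, so $\varphi$ is two-sided invertible. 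You instead work only with the right $E_B$-action on $T=\Hom_{\Cal U}(B,B')$ and the left action on $S=\Hom_{\Cal U}(B',B)$, obtain merely that $gf$ is a unit in $E_B$, and then close the gap by a split-monomorphism/complement argument: idempotents split in $\Cal E$, so $B'\cong B\oplus X$, and the uniseriality (hence indecomposability) of $A'_R$, $C'_R$ forces $X=0$. This buys you economy---you never need Corollary~\ref{4.3} or any structural information about $E_{B'}$---at the cost of using a feature specific to $\Cal U$ (indecomposable ends) that the paper's symmetric argument does not rely on; the paper's route would transplant more readily to settings where the objects are not ``indecomposable at the ends'' but the endomorphism rings on both sides are still of finite type.
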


\begin{proof}
Assume that $[B]_{a,b}=[B']_{a,b}$ for every $a=m,e$ and $b=l,u$. By Theorem~\ref{EARY}, the maximal right ideals of $E_B$ are the ideals $I_{B,a,b}$, where $(a,b)$ ranges in a non-empty subset $S$ of $\{(m,l),(e,l),(m,u),(e,u) \}$. Consequently, the maximal right ideals of $E_{B'}$ are the ideals $I_{B',a,b}$ for the same pairs $(a,b)$ in $S$ (Corollary~\ref{4.3}).

By hypothesis, there are morphisms $f_{(a,b)}\colon B\rightarrow B'$ and $f'_{(a,b)}\colon B'\rightarrow B$ such that $f'_{(a,b)}f_{(a,b)}\notin I_{B,a,b}$ for every $(a,b)\in S$.
Moreover, since $E_B/J(E_B)$ and $\prod_{(a,b)\in S}E_B/I_{B,a,b}$ are canonically isomorphic, for every $(a,b)\in S$ there exists $\varepsilon_{(a,b)}\in E_B$ such that $\varepsilon_{(a,b)} \equiv \delta_{(a,b),(c,d)}\pmod{ I_{B,c,d}}$ for every $(c,d) \in S$. Similarly, for every $(a,b)\in S$ there exists $\varepsilon'_{(a,b)}\in E_{B'}$ such that $\varepsilon'_{(a,b)} \equiv \delta_{(a,b),(c,d)}$ (mod $I_{B',c,d}$) for every $(c,d) \in S$.
The two morphisms $\varphi\colon=\sum_{(a,b)\in S} \varepsilon'_{(a,b)} f_{(a,b)} \varepsilon_{(a,b)}$ and $\psi\colon=\sum_{(a,b)\in S} \varepsilon_{(a,b)} f'_{(a,b)} \varepsilon'_{(a,b)}$ are such that $\psi \varphi \in E_B$ is invertible modulo $J(E_B)$ and $\varphi \psi \in E_{B'}$ is invertible modulo $J(E_{B'})$. Hence $\psi \varphi$ and $\varphi \psi$ are invertible in $E_B$ and $E_{B'}$ respectively, and therefore $\varphi\colon B_R\rightarrow B'_R$ is an isomorphism.

The inverse implication is clear.
\end{proof}

Let $\Cal C$ be any preadditive category. An {\em ideal} of $\Cal C$ assigns to every pair $A,B$ of objects of $\Cal C$ a subgroup $\Cal I(A,B)$ of the abelian group $\Hom_{\Cal C}(A,B)$ with the property that, for all morphisms $\varphi\colon C\rightarrow A$, $\psi\colon A\rightarrow B$ and $\omega\colon B\rightarrow D$ with $\psi \in \Cal I(A,B)$, one has that $\omega\psi\varphi \in \Cal I(C,D)$ \cite[p.~18]{Several}.

A {\em completely prime ideal} $\Cal P$ of $\Cal C$ consists of a subgroup $\Cal P(A,B)$ of $\Hom_{\Cal C}(A,B)$ for every pair of objects of $\Cal C$, such that: $(1)$ for every objects $A,B,C$ of $\Cal C$, for every $f:A \rightarrow B$ and for every $g:B\rightarrow C$, one has that $gf \in \Cal P(A,C)$ if and only if either $f \in \Cal P(A,B)$ or $g\in \Cal P(B,C)$, and $(2)$ $\Cal P(A,A)$ is a proper subgroup of $\Hom_{\Cal C}(A,A)$ for every object $A$ of $\Cal C$ \cite[p.~565]{AlbPav5}.

Let $A$ be an object of $\Cal C$ and $I$ be a two-sided ideal of the ring $\End_{\Cal C}(A)$. Let $\Cal I$ be the ideal of the category $\Cal C$ defined as follows. A morphism $f \colon X \to Y$ in $\Cal C$ belongs to $\Cal I(X,Y)$ if $\beta f \alpha \in I$ for every pair of morphisms $\alpha \colon
A \to X$ and $\beta \colon Y \to A$ in $\Cal C$. The ideal $\Cal I$ is called the {\em ideal of $\Cal C$ associated to}~$I$ \cite{AlbPav3, AlbPav4}. It is the greatest of the ideals $\Cal I'$ of $\Cal C$ with $\Cal I'(A,A)\subseteq I$. It is easily seen that $\Cal I(A,A) = I$. 
If $A$ is an object of $\Cal C$, the ideals associated to two distinct ideals of $\End_{\Cal C}(A)$ are obviously two distinct ideals of the category~$\Cal C$.

The endomorphism ring $E_B$ of any object (\ref{1}) of $\Cal U$ has $n$ maximal right ideals $I_{B,a,b}$, where $n\le 4$ (Theorem~\ref{EARY}). Let $\Cal I_{B,a,b}$ be the corresponding $n$ ideals of $\Cal E$ associated to the maximal ideals of $E_B$. Set $V(B)\colon=\{\, \Cal I_{B,a,b}\mid I_{B,a,b}$ is a maximal ideal of $E_B\,\}$, so that $V(B)$ has $n\le 4$ elements.

Now set $V(\Cal U)\colon=\bigcup_{B\in\Ob(\Cal U)}V(B)$. Thus $V(\Cal U)$ is a class of ideals of the category~$\Cal E$.

\begin{Lemma} \label{pom} For every $\Cal P\in V(\Cal U)$ and every non-zero object (\ref{2}) of $\Cal E$ such that $E_{B'}$ is semilocal, either $\Cal P(B',B') = E_{B'}$
or $\Cal P(B',B')$ is a maximal ideal of $E_{B'}$. In this second case, the ideal of $\Cal E$ associated to $\Cal P(B',B')$ is equal to $\Cal P$.
\end{Lemma}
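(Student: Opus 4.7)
The plan is to handle the trivial case $\Cal P(B',B')=E_{B'}$ separately and then, assuming $\Cal P(B',B')\ne E_{B'}$, to force the quotient $S:=E_{B'}/\Cal P(B',B')$ to be a simple artinian ring. Three ingredients will be needed: a Peirce idempotent $\bar e\in S$ whose corner $\bar eS\bar e$ is isomorphic to the division ring $D:=E_B/I_{B,a,b}$, the fact that $\Cal P(B',B')$ is a prime ideal of $E_{B'}$, and the semilocality of $E_{B'}$. Since $\Cal P(B',B')$ is clearly a two-sided ideal and $1_{B'}\notin\Cal P(B',B')$, I would pick morphisms $\alpha_0\colon B\to B'$ and $\beta_0\colon B'\to B$ in $\Cal E$ with $\beta_0\alpha_0\notin I_{B,a,b}$; since $I_{B,a,b}$ is a maximal completely prime ideal of $E_B$, there exists $\gamma\in E_B$ with $\gamma\beta_0\alpha_0\equiv 1\pmod{I_{B,a,b}}$. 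Setting $e:=\alpha_0\gamma\beta_0\in E_{B'}$, a direct computation using that $I_{B,a,b}$ is two-sided shows $e^2-e\in\Cal P(B',B')$, so $\bar e$ is idempotent in $S$; and in the factor category $\Cal E/\Cal P$, $\bar\gamma\bar\beta_0$ is a retraction of $\bar\alpha_0$, so the corresponding splitting identifies $\bar eS\bar e\cong\End_{\Cal E/\Cal P}(B)=D$.

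Next, I would verify primeness: if $x,y\in E_{B'}$ are both outside $\Cal P(B',B')$, pick $\alpha_i,\beta_i$ with $\beta_1y\alpha_1,\beta_2x\alpha_2\notin I_{B,a,b}$; complete primeness of $I_{B,a,b}$ in $E_B$ gives $(\beta_2x\alpha_2)(\beta_1y\alpha_1)\notin I_{B,a,b}$, and rewriting this as $\beta_2(xzy)\alpha_1$ with $z:=\alpha_2\beta_1\in E_{B'}$ shows $xzy\notin\Cal P(B',B')$. The main obstacle is the structural step: since $S$ is semilocal (being a quotient of the semilocal ring $E_{B'}$), write $S/J(S)=\prod_i M_{n_i}(\Delta_i)$; the image of the nonzero idempotent $\bar e$ has corner isomorphic to $D$, which forces it to sit inside a single factor $M_{n_{i_0}}(\Delta_{i_0})$ as a rank-one idempotent with $\Delta_{i_0}\cong D$. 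Letting $\tau\colon S\twoheadrightarrow M_{n_{i_0}}(D)$ be the induced surjection with kernel $K$, the fact that $\bar e\notin K$ means $\bar eK\bar e$ is a proper ideal of the division ring $\bar eS\bar e$, hence zero, so $(S\bar eS)\,K\,(S\bar eS)=0$. Two applications of primeness of $S$ then yield $K=0$, hence $\tau$ is an isomorphism and $S$ is simple artinian, i.e., $\Cal P(B',B')$ is a maximal ideal of $E_{B'}$.

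For the final assertion, let $\Cal Q$ denote the ideal of $\Cal E$ associated to the maximal ideal $\Cal P(B',B')$ of $E_{B'}$. The inclusion $\Cal P\subseteq\Cal Q$ is automatic from the ``greatest ideal'' universal property of the associated-ideal construction. For the reverse inclusion it suffices to show $\Cal Q(B,B)\subseteq I_{B,a,b}$: given $f\in\Cal Q(B,B)$, the defining condition of $\Cal Q$ applied with $\gamma=\beta_0$, $\delta=\alpha_0$ gives $\alpha_0f\beta_0\in\Cal P(B',B')$, and evaluating this in turn against $\alpha_0$ and $\beta_0$ yields $(\beta_0\alpha_0)f(\beta_0\alpha_0)\in I_{B,a,b}$; since $\overline{\beta_0\alpha_0}$ is a unit in the domain $D$, this forces $f\in I_{B,a,b}$. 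By the ``greatest ideal'' property of $\Cal P$ this gives $\Cal Q\subseteq\Cal P$, and therefore $\Cal Q=\Cal P$.
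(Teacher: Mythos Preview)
Your argument is correct. The paper's proof, however, consists of a single sentence: the lemma is declared an immediate consequence of \cite[Lemma~2.1(ii)]{FacPer1}, a general result about maximal ideals in preadditive categories with semilocal endomorphism rings. So your route is genuinely different in that you give a self-contained direct argument in place of that citation. Concretely, you manufacture the idempotent $\bar e$ with division-ring corner $\bar e S\bar e\cong D$, verify primeness of $S=E_{B'}/\Cal P(B',B')$ from the complete primeness of $I_{B,a,b}$, and then combine primeness with semilocality (via the decomposition $S/J(S)\cong\prod_i M_{n_i}(\Delta_i)$ and the observation $\bar e K\bar e=0$) to force $S$ to be simple artinian. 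This is essentially a hands-on reconstruction, in the present setting, of the mechanism behind the cited lemma; what you gain is independence from the external reference and an explicit picture of why the quotient is simple, while the paper's approach keeps the exposition short by packaging this machinery once and for all in \cite{FacPer1}. Your final paragraph recovering $\Cal Q=\Cal P$ via the two ``greatest ideal'' characterizations and the computation $(\beta_0\alpha_0)f(\beta_0\alpha_0)\in I_{B,a,b}$ is also correct and mirrors the content of \cite[Lemma~2.1(i)]{FacPer1}, which the paper invokes elsewhere for the same purpose.
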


\begin{proof} The lemma is an immediate consequence of \cite[Lemma~2.1(ii)]{FacPer1}.\end{proof}

For any ideal $\Cal P$ of $\Cal E$, $F \colon \Cal E \to \Cal E / \Cal P$ will denote the canonical functor. Recall that the factor category $\Cal E/\Cal P$ has the same objects as $\Cal E$ and, for $B,B'\in\Ob(\Cal E)=\Ob(\Cal E/\Cal P)$, the group of morphisms $B\to B'$ in $\Cal E/\Cal P$ is defined to be the factor group $\Hom_{\Cal E}(B,B')/\Cal P(B,B')$.

\begin{Lemma} \label{star} Let (\ref{1}) be an object of $\Cal U$, $I_{B,a,b}$ be a maximal ideal of $E_B$, $\Cal I_{B,a,b}$ its associated ideal in $\Cal E$ and $F \colon \Cal E \to \Cal E/\Cal I_{B,a,b}$ be the canonical functor. Then, for any object (\ref{2}) of $\Cal E$ such that $E_{B'}$ is of finite type, either $F(B') = 0$ or $F(B') \cong  F(B)$. Moreover, if $[B]_{a,b}=[B']_{a,b}$, then $F(B') \cong  F(B)$.
\end{Lemma}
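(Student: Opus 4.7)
The plan is to split on whether $\Cal I_{B,a,b}(B',B')$ equals $E_{B'}$ or is a proper ideal, invoking Lemma~\ref{pom}. In the first subcase, $1_{B'}\in\Cal I_{B,a,b}(B',B')$, so the identity of $F(B')$ becomes zero in $\Cal E/\Cal I_{B,a,b}$, whence $F(B')\cong 0$. The substantive work is in the complementary subcase, where Lemma~\ref{pom} forces $\Cal I_{B,a,b}(B',B')$ to be a maximal ideal of $E_{B'}$. Since $E_{B'}$ has finite type, Proposition~\ref{Type} makes this maximal ideal two-sided with division-ring quotient $E_{B'}/\Cal I_{B,a,b}(B',B')$. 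At the same time, $\End_{\Cal E/\Cal I_{B,a,b}}(F(B))=E_B/I_{B,a,b}$ is a division ring by Theorem~\ref{EARY}, since $I_{B,a,b}$ is assumed maximal. Thus both $F(B)$ and $F(B')$ have division-ring endomorphism rings in the factor category.

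To exhibit an isomorphism $F(\alpha)\colon F(B)\to F(B')$, I use the very definition of the ideal of $\Cal E$ associated to $I_{B,a,b}$. The condition $1_{B'}\notin\Cal I_{B,a,b}(B',B')$ directly supplies morphisms $\alpha\colon B\to B'$ and $\beta\colon B'\to B$ in $\Cal E$ with $\beta\alpha\notin I_{B,a,b}$, so $F(\beta)F(\alpha)$ is a nonzero, hence invertible, element of the division ring $E_B/I_{B,a,b}$. To see that $F(\alpha)F(\beta)$ is invertible as well, it suffices to check $\alpha\beta\notin\Cal I_{B,a,b}(B',B')$. If $\alpha\beta$ did belong to this associated ideal, the defining property specialized at the same pair $(\alpha,\beta)$ would force $\beta(\alpha\beta)\alpha=(\beta\alpha)^2\in I_{B,a,b}$, contradicting the completely prime property of $I_{B,a,b}$ recorded in Theorem~\ref{EARY}. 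With $F(\beta)F(\alpha)$ and $F(\alpha)F(\beta)$ both invertible in the respective division rings, $F(\alpha)$ admits a two-sided inverse in $\Cal E/\Cal I_{B,a,b}$, giving $F(B)\cong F(B')$.

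For the moreover clause, the assumption $[B]_{a,b}=[B']_{a,b}$ furnishes morphisms $f\colon B\to B'$ and $g\colon B'\to B$ in $\Cal E$ with $gf\notin I_{B,a,b}$ (this is immediate from the definitions of the four $(a,b)$-classes, and is the content of Remark~\ref{1.1}); in particular $1_{B'}\notin\Cal I_{B,a,b}(B',B')$, placing us in the second subcase and yielding $F(B')\cong F(B)$.

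The main obstacle in the argument above is the passage from $\beta\alpha\notin I_{B,a,b}$ to $\alpha\beta\notin\Cal I_{B,a,b}(B',B')$, since one cannot simply transpose factors in a noncommutative ring; everything else is a routine translation between local morphisms and their division-ring quotients, but this step genuinely relies on the completely prime property of $I_{B,a,b}$.
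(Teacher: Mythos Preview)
Your proof is correct and follows essentially the same route as the paper: both invoke Lemma~\ref{pom} to split into the two cases, extract $\alpha,\beta$ with $\beta\alpha\notin I_{B,a,b}$ from $1_{B'}\notin\Cal I_{B,a,b}(B',B')$, and then use the completely prime property of $I_{B,a,b}$ via $(\beta\alpha)^2=\beta(\alpha\beta)\alpha$ to rule out $\alpha\beta\in\Cal I_{B,a,b}(B',B')$. Your treatment of the ``moreover'' clause (reducing to the second subcase by checking $1_{B'}\notin\Cal I_{B,a,b}(B',B')$) is a slight repackaging of the paper's direct repetition of the earlier argument, but the content is identical.
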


\begin{proof} 
From Lemma \ref{pom}, we know that either $F(B') = 0$ or the endomorphism ring of $F(B')$ is a division ring. Let us consider the latter case. 
As $1_{B'}\notin\Cal I_{B,a,b}(B',B')$, there are $\alpha \colon B \to B'$ and $\beta \colon B' \to B$ such that $\beta \alpha \notin I_{B,a,b}$. Thus $\beta(\alpha\beta)\alpha\notin I_{B,a,b}$. It follows that $\alpha\beta \notin \Cal I_{B,a,b}(B',B')$, 
Therefore $F(\beta)F(\alpha)$  and $F(\alpha)F(\beta)$ are automorphisms, so $F(B) \cong  F(B')$ follows. For the last assertion, note that, also in the case $[B]_{a,b}=[B']_{a,b}$, there are morphisms $\alpha \colon B \to B'$ and $\beta \colon B' \to B$ such that $\beta \alpha \notin I_{B,a,b}$. So, as before, we can conclude that $F(B)\cong F(B')$.
\end{proof}

In the following corollary, we collect some basic properties that can be deduced easily from the previous two lemmas.

\begin{corollary}\label{4.6}
Let (\ref{1}) be an object of $\Cal U$, $I_{B,a,b}$ be a maximal ideal of $E_B$, $\Cal I_{B,a,b}$ its associated ideal in $\Cal E$ and $F \colon \Cal E \to \Cal E/\Cal I_{B,a,b}$ be the canonical functor. 

{\rm (1)} For any object (\ref{2}) of $\Cal U$, the following conditions are equivalent:

\begin{enumerate}
\item[{\rm (a)}]
$F(B')= 0$ in $\Cal E/\Cal I_{B,a,b}$.
\item[{\rm (b)}]
$\Cal I_{B,a,b}(B',B')=E_{B'}$.
\item[{\rm (c)}]
$I_{B',a,b}\subset \Cal I_{B,a,b}(B',B')$.
\item[{\rm (d)}]
$[B]_{a,b}\neq[B']_{a,b}$.
\end{enumerate}

{\rm (2)} For any object (\ref{2}) of $\Cal U$, the following conditions are equivalent:

\begin{enumerate}
\item[{\rm (a)}]
$F(B)\cong F(B')$ in $\Cal E/\Cal I_{B,a,b}$.
\item[{\rm (b)}]
$\Cal I_{B,a,b}(B',B')$ is a proper 
 ideal of $E_{B'}$.
\item[{\rm (c)}]
$I_{B',a,b}= \Cal I_{B,a,b}(B',B')$. 
\item[{\rm (d)}]
$[B]_{a,b}=[B']_{a,b}$.
\end{enumerate}

\end{corollary}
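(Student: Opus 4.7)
The plan is to exploit the dichotomy in Lemma \ref{pom}: for every $B'$ in $\Cal U$, $\Cal I_{B,a,b}(B',B')$ is either all of $E_{B'}$ or a maximal two-sided ideal of $E_{B'}$; part (1) treats the first case, part (2) the second, and the two parts are logical negations of one another.

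For part (1), the equivalence (a) $\Leftrightarrow$ (b) is immediate from the construction of $\Cal E/\Cal I_{B,a,b}$, since $F(B')=0$ iff $1_{B'}\in\Cal I_{B,a,b}(B',B')$ iff $\Cal I_{B,a,b}(B',B')=E_{B'}$. For (b) $\Leftrightarrow$ (d), I would use Remark \ref{1.1}: taking $h=1_{B'}$ shows that (b) forces $\beta\alpha\in I_{B,a,b}$ for every $\alpha\colon B\to B'$ and $\beta\colon B'\to B$, that is, $[B]_{a,b}\neq[B']_{a,b}$; conversely, if $[B]_{a,b}\neq[B']_{a,b}$, applying the remark to the pair $(h\alpha,\beta)$ yields $\beta h\alpha\in I_{B,a,b}$ for every $h,\alpha,\beta$, so $\Cal I_{B,a,b}(B',B')=E_{B'}$. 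The direction (b) $\Rightarrow$ (c) is trivial since $I_{B',a,b}$ is proper by Theorem \ref{EARY}, and the essential step is (c) $\Rightarrow$ (d): pick $h\in\Cal I_{B,a,b}(B',B')\setminus I_{B',a,b}$, suppose for contradiction $[B]_{a,b}=[B']_{a,b}$, use Remark \ref{1.1} to choose $\alpha_0,\beta_0$ with $\alpha_0\beta_0\notin I_{B',a,b}$, invoke complete primeness of $I_{B',a,b}$ to deduce $h\alpha_0\beta_0\notin I_{B',a,b}$, and translate back via the pointwise symmetry to $\beta_0 h\alpha_0\notin I_{B,a,b}$, contradicting $h\in\Cal I_{B,a,b}(B',B')$.

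Part (2) is then the complementary case. The equivalence (2a) $\Leftrightarrow$ (2b) follows from Lemma \ref{star}, after observing that $F(B)\neq 0$ (else $1_B\in I_{B,a,b}$): if $\Cal I_{B,a,b}(B',B')$ is proper then Lemma \ref{star} yields $F(B')\cong F(B)$, and conversely $F(B)\cong F(B')$ forces $F(B')\neq 0$. The equivalence (2b) $\Leftrightarrow$ (2d) is the contrapositive of (1b) $\Leftrightarrow$ (1d), and (2c) $\Rightarrow$ (2b) is trivial. For (2b) $\Rightarrow$ (2c), I would first verify $I_{B',a,b}\subseteq\Cal I_{B,a,b}(B',B')$ by noting that for $h\in I_{B',a,b}$ and any $\alpha,\beta$, the element $h\alpha\beta$ lies in $I_{B',a,b}$, whence $\beta h\alpha\in I_{B,a,b}$ by the pointwise symmetry; strict inclusion is then excluded because it coincides with (1c), which by part (1) forces $\Cal I_{B,a,b}(B',B')=E_{B'}$, contradicting (2b).

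The main subtlety used throughout is the pointwise form of Remark \ref{1.1}: for every $f\colon B\to B'$ and $g\colon B'\to B$ in $\Cal U$ one has $gf\in I_{B,a,b}$ if and only if $fg\in I_{B',a,b}$. This is the content of \cite[Lemma~1.1]{TAMS} applied to the uniserial pair $(A,A')$ or $(C,C')$, and relies crucially on uniseriality; once it is accepted, the rest of the proof is bookkeeping around Lemmas \ref{pom} and \ref{star}.
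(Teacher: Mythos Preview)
Your proof is correct and follows essentially the same route as the paper's own argument: both hinge on Lemma~\ref{pom}, Lemma~\ref{star}, and the complete-primeness property encoded in \cite[Lemma~1.1]{TAMS}. The only differences are organizational---the paper proves (1) via the cycle (a)$\Leftrightarrow$(b)$\Rightarrow$(c)$\Rightarrow$(d)$\Rightarrow$(b) and argues (c)$\Rightarrow$(d) directly rather than by contradiction, and for the inclusion $I_{B',a,b}\subseteq\Cal I_{B,a,b}(B',B')$ in (2) it appeals to \cite[Lemma~1.1]{TAMS} in one step rather than routing through your ``pointwise symmetry''---but the substance is identical.
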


\begin{proof} Proof of (1).
(a)${}\Leftrightarrow{}$(b) $F(B')= 0$ if and only if the endomorphism ring $E_{B'}/\Cal I_{B,a,b}(B',B')$ of $F(B')$ in the factor category $\Cal E/ \Cal I_{B,a,b}$ is the zero ring.

(b)${}\Rightarrow{}$(c) It suffices to note that $I_{B',a,b}$ is always a proper ideal of $E_{B'}$.

(c)${}\Rightarrow{}$(d) Let $\varphi$ be an element in $\Cal I_{B,a,b}(B',B')$ not in $I_{B',a,b}$. For any two morphisms $f\colon B_R\to B'_R$ and $g\colon B'_R\to B_R$ in the category $\Cal E$, one has $g\varphi f \in I_{B,a,b}$, by definition of associated ideal. Since $\varphi \notin I_{B',a,b}$, it follows from \cite[Lemma~1.1]{TAMS} that $gf \in I_{B,a,b}$. This means that $[B]_{a,b}\neq[B']_{a,b}$.

(d)${}\Rightarrow{}$(b)  By Remark \ref{1.1}, if $[B]_{a,b}\ne[B']_{a,b}$, then, for any morphism $f\colon B\to B'$ and $g\colon B' \to B$ in the category $\Cal U$, we have that $g1_{B'} f=g f \in I_{B,a,b}$.
Therefore $1_{B'} \in \Cal I_{B,a,b}(B',B')$, so that $\Cal I_{B,a,b}(B',B')=E_{B'}$.

Proof of (2).
First notice that, by Lemma \ref{star}, either $F(B)=0$ or $F(B)\cong F(B')$ in $\Cal E/\Cal I_{B,a,b}$. Moreover, $I_{B',a,b}$ is always contained in $\Cal I_{B,a,b}(B',B')$. As a matter of fact, suppose $\varphi \in I_{B',a,b}$. By \cite[Lemma~1.1]{TAMS}, for any $f \colon B\to B'$ and any $g \colon B'\to B$, we have that $g\varphi f \in I_{B,a,b}$, so $\varphi \in I_{B,a,b}(B',B')$. Now all the implications follow from  part (1).
\end{proof}

\begin{corollary}\label{cor4.8}
Let {\rm (\ref{1})} and {\rm (\ref{2})} be two objects in the category $\Cal U$. Fix $a=m,e$ and $b=l,u$. Suppose that $[B]_{a,b}=[B']_{a,b}$. Then the following properties hold:
\begin{enumerate}
\item[{\rm (a)}]
$I_{B,a,b}$ is a maximal right ideal of $E_{B}$  if and only if $I_{B',a,b}$ is a maximal right ideal of $E_{B'}$.
\item[{\rm (b)}]
Suppose that $I_{B,a,b}$ is a maximal right ideal of $E_B$. Then, for every $c=m,e$ and $d=l,u$, $I_{B,c,d}= I_{B,a,b}$ if and only if $I_{B',c,d}= I_{B',a,b}$.\end{enumerate}
\end{corollary}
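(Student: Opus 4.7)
The plan for (a) is to transport maximality through the associated-ideal machinery. Assume $I_{B,a,b}$ is a maximal right ideal of $E_B$; then $\Cal I_{B,a,b}\in V(B)\subseteq V(\Cal U)$ is well defined, and because $[B]_{a,b}=[B']_{a,b}$, Corollary~\ref{4.6}(2) gives $\Cal I_{B,a,b}(B',B')=I_{B',a,b}$, which is in particular a proper ideal of $E_{B'}$. Lemma~\ref{pom} then forces $\Cal I_{B,a,b}(B',B')=I_{B',a,b}$ to be a maximal (two-sided) ideal of $E_{B'}$. By Theorem~\ref{EARY} applied to $B'$, any proper right ideal of $E_{B'}$ containing $I_{B',a,b}$ sits inside some $I_{B',c,d}$, so maximality as a two-sided ideal upgrades to maximality as a right ideal in this setting. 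The reverse implication follows by the same argument with $B$ and $B'$ interchanged, using $[B']_{a,b}=[B]_{a,b}$.

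For (b), the plan is a double application of Lemma~\ref{4.2}. Assume $I_{B,c,d}=I_{B,a,b}$. The inclusion $I_{B,c,d}\subseteq I_{B,a,b}$ together with Lemma~\ref{4.2}(a) yields $I_{B',c,d}\subseteq I_{B',a,b}$, and the same inclusion together with Lemma~\ref{4.2}(b) yields $[B]_{c,d}=[B']_{c,d}$. This freshly obtained $(c,d)$-class equality makes a second invocation of Lemma~\ref{4.2}(a) legitimate, this time with $(c,d)$ playing the role of the fixed pair; applied to the reverse inclusion $I_{B,a,b}\subseteq I_{B,c,d}$ it gives $I_{B',a,b}\subseteq I_{B',c,d}$. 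Combining the two inclusions forces $I_{B',c,d}=I_{B',a,b}$. The converse again follows by symmetry, now from $[B']_{a,b}=[B]_{a,b}$.

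The main obstacle I anticipate lies in (a): Lemma~\ref{4.2}(a) only certifies containments of the form $I_{*,c,d}\subseteq I_{*,a,b}$, so by itself it is intrinsically one-sided and cannot detect whether $I_{B,a,b}$ is a maximal element of $\Cal S_B$. The associated ideal $\Cal I_{B,a,b}$ and Lemma~\ref{pom} supply precisely the missing ingredient, upgrading ring-theoretic maximality of $I_{B,a,b}$ into the category-level dichotomy ``$\Cal I_{B,a,b}(B',B')$ is improper or maximal'', which Corollary~\ref{4.6}(2) then forces into the maximal case. In (b) the only subtle point is the bootstrap: Lemma~\ref{4.2}(b) promotes the original $(a,b)$-hypothesis into a new $(c,d)$-hypothesis that legitimises a second, symmetric, application of Lemma~\ref{4.2}(a).
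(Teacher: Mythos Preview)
Your proof is correct and follows essentially the same route as the paper: part (a) uses Corollary~\ref{4.6}(2) together with Lemma~\ref{pom} to transfer maximality, and part (b) is exactly the double application of Lemma~\ref{4.2} that the paper gives. Your extra remark about upgrading two-sided maximality to right maximality via Theorem~\ref{EARY} is harmless but unnecessary, since for rings of finite type the two notions coincide (Proposition~\ref{Type}); the paper simply states ``maximal right ideal'' directly.
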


\begin{proof}
{\rm (a)} It suffices to show that $I_{B,a,b}$ maximal implies $I_{B',a,b}$ maximal. From Corollary \ref{4.6} (2), $I_{B',a,b}=\Cal I_{B,a,b}(B',B')$ is a proper ideal of $E_{B'}$, which is a maximal right ideal of $E_{B'}$ by Lemma \ref{pom}.

{\rm (b)} By {\rm (a)}, the hypotheses on (\ref{1}) and (\ref{2}) are symmetrical. Therefore it suffices to show that $I_{B,c,d}= I_{B,a,b}$ implies $I_{B',c,d}= I_{B',a,b}$. The inclusion $I_{B',c,d}\subseteq I_{B',a,b}$ follows from Lemma \ref{4.2}{\rm (a)}. Moreover, by Lemma \ref{4.2} {\rm (b)}, we can interchange the role of $(a,b)$ and $(c,d)$ and deduce the opposite inclusion applying Lemma \ref{4.2} {\rm (a)} again.
\end{proof}

\begin{corollary}\label{inter} Let (\ref{1}) and {\rm (\ref{2})} be two objects of $\Cal U$. Then $V(B)\cap V(B')\neq \emptyset$ if and only if there exists a pair $(p,q)\in\{m,e\}\times\{l,u\}$ such that $I_{B,p,q}$ is a maximal right ideal of $E_B$, $I_{B',p,q}$ is a maximal ideal of $E_{B'}$ and $\Cal I_{B,p,q}=\Cal I_{B',p,q}$. Moreover, for such a pair, $[B]_{p,q}=[B']_{p,q}$.
\end{corollary}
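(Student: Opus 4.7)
The proof naturally splits into two implications, and the two directions have very different weight.

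The "if" direction is immediate from the definitions of $V(B)$ and $V(B')$: given a pair $(p,q)$ as in the statement, $\Cal I_{B,p,q}\in V(B)$ and $\Cal I_{B',p,q}\in V(B')$ by the definition of these sets, and the assumption $\Cal I_{B,p,q}=\Cal I_{B',p,q}$ exhibits a common element. I would dispose of this in a single line.

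For the converse, I would start from a common element $\Cal P\in V(B)\cap V(B')$, so there exist pairs $(a,b)$ and $(c,d)$ with $I_{B,a,b}$ maximal in $E_B$, $I_{B',c,d}$ maximal in $E_{B'}$, and $\Cal P=\Cal I_{B,a,b}=\Cal I_{B',c,d}$ as ideals of $\Cal E$. The plan is to show that one may take $(p,q):=(a,b)$. The key step is to evaluate $\Cal P$ at the pair of objects $(B',B')$: using the identity $\Cal I(A,A)=I$ (stated in the discussion of associated ideals preceding Lemma~\ref{pom}) applied on the $B'$ side, one obtains
\[
\Cal I_{B,a,b}(B',B')=\Cal P(B',B')=\Cal I_{B',c,d}(B',B')=I_{B',c,d},
\]
which is a proper (in fact maximal) ideal of $E_{B'}$.

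Now I would feed this into Corollary~\ref{4.6}(2), which applies since $(B')$ lies in $\Cal U$. Its conditions (b), (c), (d) then immediately give $[B]_{a,b}=[B']_{a,b}$ (this also yields the "moreover" assertion) together with $I_{B',a,b}=\Cal I_{B,a,b}(B',B')=I_{B',c,d}$; in particular $I_{B',a,b}$ is a maximal ideal of $E_{B'}$. Finally I would invoke the injectivity of the associated-ideal construction, explicitly noted in the paper (distinct two-sided ideals of $\End_{\Cal C}(A)$ have distinct associated ideals in $\Cal C$): the equality $I_{B',a,b}=I_{B',c,d}$ forces $\Cal I_{B',a,b}=\Cal I_{B',c,d}=\Cal P=\Cal I_{B,a,b}$. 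Thus $(p,q):=(a,b)$ satisfies all four required conditions.

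The main obstacle — and essentially the only substantive content of the corollary — is showing that the two pairs $(a,b)$ and $(c,d)$ naming $\Cal P$ on the $B$ and $B'$ sides can be aligned into a single pair. Once $\Cal P$ is localised at $(B',B')$ and Corollary~\ref{4.6}(2) is brought to bear, this alignment is automatic; everything else is bookkeeping.
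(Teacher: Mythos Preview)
Your proof is correct and follows essentially the same route as the paper's: evaluate the common ideal $\Cal P$ at $(B',B')$, then use Corollary~\ref{4.6} to identify $I_{B',a,b}$ with $I_{B',c,d}$ (the paper phrases this step via part~(1) by contradiction, you via part~(2) directly, which is a harmless repackaging). One tiny quibble: the step ``$I_{B',a,b}=I_{B',c,d}$ forces $\Cal I_{B',a,b}=\Cal I_{B',c,d}$'' needs only that the associated-ideal construction is a well-defined function, not its injectivity; the citation is misplaced but the conclusion is of course immediate.
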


\begin{proof}
Let $\Cal P \in V(B)\cap V(B')$. Then there exist two pairs $(p,q)$ and $(r,s)$ in $\{m,e\}\times\{l,u\}$ such that $I_{B,p,q}$ is a maximal right ideal of $E_B$, $I_{B',r,s}$ is a maximal ideal of $E_{B'}$ and $\Cal P=\Cal I_{B,p,q}=\Cal I_{B',r,s}$ is the ideal of $\Cal E$ associated both to $I_{B,p,q}$ and $I_{B',r,s}$. We have $I_{B',p,q}\subseteq \Cal I_{B,p,q}(B',B')=\Cal I_{B',r,s}(B',B')=I_{B',r,s}$. If the inclusion is proper, then $I_{B',r,s}=E_{B'}$  by Corollary \ref{4.6} (1), a contradiction. It follows that $I_{B',p,q}=I_{B',r,s}$, so we can replace the pair $(r,s)$ with $(p,q)$.
Clearly, the converse holds as well. Finally, the last assertion follows from Corollary \ref{4.6}~(2). 
\end{proof}

According to \cite{FacPer1}, we say that an ideal $\Cal M$ of a preadditive category $\Cal C$ is {\it maximal} if the improper ideal of $\Cal C$ is the unique ideal of the category $\Cal C$ properly containing $\Cal M$. Moreover, we say that a preadditive category $\Cal C$ is {\it simple} \cite{FacPer1} if it has exactly two ideals, necessarily the trivial ones. Hence, a simple category has non-zero objects.

In the category $\Cal U$, $V(\Cal U)$ turns out to be a class of maximal ideals of $\Cal U$, by \cite[Lemma~2.4]{FacPer1}.

\begin{Prop}\label{4.7}
Let (\ref{1}) be an object of $\Cal U$, $I_{B,a,b}$ be a maximal ideal of $E_B$ and $\Cal I_{B,a,b}$ its associated ideal in $\Cal U$. Then the factor category $\Cal U/\Cal I_{B,a,b}$ is simple and the endomorphism ring of all its non-zero objects are division rings.
\end{Prop}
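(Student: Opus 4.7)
My plan is to exploit two facts already in hand: the maximality of $\Cal I_{B,a,b}$ as an ideal of $\Cal U$ (noted in the paragraph preceding the proposition, where $V(\Cal U)$ is described as a class of maximal ideals of $\Cal U$ via \cite[Lemma~2.4]{FacPer1}), together with Lemma~\ref{pom}, which applies to every non-zero $B' \in \Cal U$ since such a $B'$ has a semilocal endomorphism ring by Proposition~\ref{semilocal}(b).

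For the simplicity statement, I would first invoke the standard correspondence between ideals of the factor category $\Cal U/\Cal I_{B,a,b}$ and ideals of $\Cal U$ containing $\Cal I_{B,a,b}$: given an ideal $\Cal J$ of $\Cal U/\Cal I_{B,a,b}$, its preimage $F^{-1}(\Cal J)$, defined componentwise on each Hom-group, is an ideal of $\Cal U$ containing $\Cal I_{B,a,b}$, and this assignment is a lattice bijection. By the maximality of $\Cal I_{B,a,b}$ in $\Cal U$, the only ideals of $\Cal U$ containing $\Cal I_{B,a,b}$ are $\Cal I_{B,a,b}$ itself and the improper ideal, so $\Cal U/\Cal I_{B,a,b}$ has exactly two ideals, namely the zero ideal and the improper one. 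To rule out the degenerate case (the definition of a simple category in the paper requires that non-zero objects exist), I would observe that $F(B) \neq 0$ in $\Cal U/\Cal I_{B,a,b}$, because $1_B \notin I_{B,a,b} = \Cal I_{B,a,b}(B,B)$ (recall that the associated ideal satisfies $\Cal I_{B,a,b}(B,B) = I_{B,a,b}$).

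For the division-ring assertion, let $B'$ be any object of $\Cal U$ with $F(B') \neq 0$. The endomorphism ring of $F(B')$ in $\Cal U/\Cal I_{B,a,b}$ is $E_{B'}/\Cal I_{B,a,b}(B',B')$. Since $F(B') \neq 0$ is equivalent to $\Cal I_{B,a,b}(B',B') \neq E_{B'}$, Lemma~\ref{pom} forces $\Cal I_{B,a,b}(B',B')$ to be a maximal two-sided ideal of $E_{B'}$. By Theorem~\ref{EARY}, every maximal two-sided ideal of $E_{B'}$ is one of the $I_{B',c,d}$, and $E_{B'}/I_{B',c,d}$ is a division ring. I do not anticipate a serious obstacle; the only matter requiring some care is the lattice correspondence between ideals of $\Cal U/\Cal I_{B,a,b}$ and ideals of $\Cal U$ containing $\Cal I_{B,a,b}$, which is a routine verification of the ideal axioms on Hom-groups combined with the fact that $F$ is the identity on objects and surjective on morphisms.
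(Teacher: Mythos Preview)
Your argument is correct. The paper's own proof is a one-line citation of \cite[Theorem~3.2]{FacPer1}, so your route is genuinely different in presentation: instead of invoking that external result, you assemble the conclusion from ingredients already available in the paper---the maximality of $\Cal I_{B,a,b}$ in $\Cal U$ (stated just before the proposition via \cite[Lemma~2.4]{FacPer1}), the lattice correspondence between ideals of $\Cal U/\Cal I_{B,a,b}$ and ideals of $\Cal U$ containing $\Cal I_{B,a,b}$, Lemma~\ref{pom}, and Theorem~\ref{EARY}. What your approach buys is self-containment: a reader need not chase the external reference to see why the factor category is simple or why the endomorphism rings are division rings. What the paper's approach buys is brevity, since \cite[Theorem~3.2]{FacPer1} packages exactly this kind of statement (simple factor category with division endomorphism rings) in a general setting, and the hypotheses here feed directly into it. Either way, the substance is the same; your version simply unpacks the relevant part of that theorem using the tools already on the table.
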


\begin{proof}
The Proposition is an immediate consequence of \cite[Theorem~3.2]{FacPer1}.
\end{proof}

\begin{corollary}\label{4.7,5} Let (\ref{1}) be an object of $\Cal U$, $I_{B,a,b}$ be a maximal ideal of $E_B$ and $\Cal I_{B,a,b}$ its associated ideal in $\Cal E$. If $F(B)^t\cong F(B)^s$ in the factor category $\Cal E/\Cal I_{B,a,b}$ for integers $t,s\ge 0$, then $t=s$.\end{corollary}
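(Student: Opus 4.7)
The plan is to reduce the corollary to the elementary fact that, over a division ring $D$, an isomorphism $D^t\cong D^s$ of right $D$-modules forces $t=s$.

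First, I would identify the endomorphism ring of $F(B)$ in the factor category. By construction of the associated ideal, $\Cal I_{B,a,b}(B,B)=I_{B,a,b}$, so
$$\End_{\Cal E/\Cal I_{B,a,b}}(F(B))\cong E_B/I_{B,a,b}.$$
Since $I_{B,a,b}$ is one of the maximal two-sided ideals appearing in the decomposition $E_B/J(E_B)\cong D_1\times\cdots\times D_n$ of Theorem~\ref{EARY}, the quotient $D:=E_B/I_{B,a,b}$ is a division ring, and in particular $F(B)\ne 0$ in the factor category.

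Next, I would apply the additive functor $H:=\Hom_{\Cal E/\Cal I_{B,a,b}}(F(B),-)$, viewed as taking values in the category of right $D$-modules via pre-composition. Since $H$ preserves finite coproducts (which in an additive category coincide with biproducts), for every $k\ge 0$ one has $H(F(B)^k)\cong D^k$ as right $D$-modules. Any isomorphism $F(B)^t\cong F(B)^s$ in $\Cal E/\Cal I_{B,a,b}$ therefore induces an isomorphism $D^t\cong D^s$ of right $D$-modules, and the invariance of dimension over a division ring yields $t=s$. The degenerate cases $t=0$ or $s=0$ are included in this argument, since $D^s=0$ forces $s=0$ as $D$ is a non-zero ring. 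The entire argument is essentially bookkeeping; the one point that must be pinned down is that $E_B/I_{B,a,b}$ is a division ring, which is immediate from Theorem~\ref{EARY}, so I do not anticipate any serious obstacle.
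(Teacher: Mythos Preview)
Your proposal is correct and follows essentially the same approach as the paper: both apply the functor $\Hom_{\Cal E/\Cal I_{B,a,b}}(F(B),-)$ into $\Mod D$, where $D=\End_{\Cal E/\Cal I_{B,a,b}}(F(B))$ is a division ring, and then invoke the invariance of dimension. You have simply spelled out the details that the paper leaves implicit.
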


\begin{proof}
Let $D$ be the endomorphism ring of the object $F(B)$ of $\Cal E/\Cal I_{B,a,b}$. Apply the functor $\Hom(F(B),-)\colon \Cal E/\Cal I_{B,a,b}\to\Mod D$.
\end{proof}

Notice that we are using the following notations:

$I_{B,a,b}$ is an ideal of $E_B$, the endomorphism ring of an object of $\Cal E$;

$\Cal I_{B,a,b}$ is the ideal of $\Cal E$, or possibly a full subcategory of $\Cal E$, associated to $I_{B,a,b}$;

$\Cal K_{a,b}$ is one of four completely prime ideals of $\Cal U$, defined, for every pair of objects (\ref{1}), (\ref{2}) of $\Cal U$, by
\newline
$\Cal K_{m,l}((\ref{1}), (\ref{2})):=\{\,f\colon B\to B' \mid \ker(f)\cap A\ne 0\,\}$,
\newline
$\Cal K_{e,l}((\ref{1}), (\ref{2})):=\{\,f\colon B\to B'  \mid f(A)\subset A'\,\}$,
\newline
$\Cal K_{m,u}((\ref{1}), (\ref{2})):=\{\,f\colon B\to B'  \mid A\subset f^{-1}(A')\,\}$ and 
\newline
$\Cal K_{e,u}((\ref{1}), (\ref{2})):=\{\,f\colon B\to B'  \mid f(B)+A'\subset B'\,\}$.

\begin{Lemma}\label{nuovo}
Let
$$
\xymatrix@1{ 0 \ar[r] &A_i \ar[r] &B_i \ar[r] &C_i \ar[r] &0,}\qquad i=1,2,\dots,n,
$$
be objects of $\Cal U$. Then every maximal two-sided ideal of $E_{\oplus_{j=1}^n B_j}$ is of the form
$\Cal I_{B_k,a,b}(\oplus_{j=1}^n B_j,\oplus_{j=1}^n B_j)$
for some $k=1,\dots,n$, $a=m,e$ and $b=l,u$ such that $I_{B_k,a,b}$ is a maximal right ideal of $E_{B_k}$. Conversely, if $(k,a,b)$ is a triple such that $k=1,\dots,n$, $a=m,e$, $b=l,u$ and $I_{B_k,a,b}$ is a maximal right ideal of $E_{B_k}$, then $\Cal I_{B_k,a,b}(\oplus_{j=1}^n B_j,\oplus_{j=1}^n B_j)$ is a maximal two-sided ideal of $E_{\oplus_{j=1}^n B_j}$.
\end{Lemma}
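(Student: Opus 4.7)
The plan is to exploit the canonical idempotents $e_k := \iota_k \pi_k \in E_B$ (where $B := \bigoplus_{j=1}^n B_j$, and $\iota_k, \pi_k$ are the structural inclusions and projections) together with the corner-ring construction. Before either direction, I verify that $E_B$ is semilocal, so that Lemma~\ref{pom} applies to it. This follows from Proposition~\ref{semilocal}(b) applied to the direct-sum short exact sequence $\xymatrix@1{ 0 \ar[r] &\bigoplus_j A_j \ar[r]&B \ar[r] &\bigoplus_j C_j \ar[r] &0}$, once one recalls that a finite direct sum of modules with semilocal endomorphism rings has a semilocal endomorphism ring.

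For the converse implication, fix a triple $(k,a,b)$ with $I_{B_k,a,b}$ maximal in $E_{B_k}$, and set $P := \Cal I_{B_k,a,b}(B,B)$. I claim $P$ is proper: by the very definition of the ideal associated to $I_{B_k,a,b}$, the inclusion $1_B \in P$ would force $\pi_k\, 1_B\, \iota_k = \pi_k \iota_k = 1_{B_k} \in I_{B_k,a,b}$, contradicting the properness of $I_{B_k,a,b}$. Lemma~\ref{pom} then upgrades ``proper'' to ``maximal two-sided'' automatically.

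For the direct implication, let $M$ be a maximal two-sided ideal of $E_B$. Since $1_B = \sum_j e_j \notin M$, at least one idempotent $e_k$ lies outside $M$. Define the corner ideal $M_k := \{\, g \in E_{B_k} \mid \iota_k g \pi_k \in M\,\}$. Using $\pi_k \iota_k = 1_{B_k}$ one checks via $\iota_k(hg)\pi_k = (\iota_k h \pi_k)(\iota_k g \pi_k)$ that $M_k$ is a two-sided ideal of $E_{B_k}$, and it is proper because $1_{B_k} \in M_k$ would place $e_k$ in $M$. By Theorem~\ref{EARY}, $M_k$ is contained in some maximal ideal $I_{B_k,a,b}$ of $E_{B_k}$. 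Set $P := \Cal I_{B_k,a,b}(B,B)$, which is a maximal two-sided ideal of $E_B$ by the converse direction. To conclude $M = P$ it suffices to prove the inclusion $M \subseteq P$: for any $f \in M$ and any $\alpha \colon B_k \to B$, $\beta \colon B \to B_k$, the identity $\iota_k (\beta f \alpha) \pi_k = (\iota_k \beta)\, f\, (\alpha \pi_k)$ shows that this element lies in $M$ since $M$ is two-sided, hence $\beta f \alpha \in M_k \subseteq I_{B_k,a,b}$, and therefore $f \in P$ by the definition of the associated ideal. Maximality of $M$ then forces $M = P$.

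The main obstacle will be handling the corner-ring passage cleanly: verifying that $M_k$ is a proper two-sided ideal of $E_{B_k}$ and that the inclusion $M \subseteq \Cal I_{B_k,a,b}(B,B)$ holds. Both rely on the identity $\pi_k \iota_k = 1_{B_k}$ and on the universal definition of the ideal associated to $I_{B_k,a,b}$; otherwise the argument is routine bookkeeping combining Lemma~\ref{pom} with Theorem~\ref{EARY}.
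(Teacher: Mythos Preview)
Your proof is correct. The converse direction is essentially the paper's argument (both show properness via $1_B\mapsto 1_{B_k}$ and then invoke Lemma~\ref{pom}, which is \cite[Lemma~2.1(ii)]{FacPer1}).

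In the direct implication you take a genuinely more elementary route. The paper forms the ideal $\Cal I$ of $\Cal E$ associated to the given maximal ideal $M$, invokes \cite[Lemma~2.1(ii)]{FacPer1} to see that each $\Cal I(B_k,B_k)$ is either all of $E_{B_k}$ or a \emph{maximal} ideal of $E_{B_k}$, rules out the first possibility for some $k$ via the same $1_B=\sum_k \varepsilon_k\pi_k$ argument you use, and then applies \cite[Lemma~2.1(i)]{FacPer1} to identify $\Cal I$ with $\Cal I_{B_k,a,b}$. You bypass the associated ideal $\Cal I$ and \cite[Lemma~2.1(i)]{FacPer1} entirely: you only need your corner ideal $M_k$ to be \emph{proper} (not maximal), you contain it in some maximal $I_{B_k,a,b}$ via Theorem~\ref{EARY}, and you recover $M=\Cal I_{B_k,a,b}(B,B)$ from the inclusion $M\subseteq \Cal I_{B_k,a,b}(B,B)$ together with the maximality of $M$. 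This is a cleaner, more self-contained argument; the paper's version has the minor advantage of simultaneously exhibiting that $M_k$ itself is already maximal (indeed $M_k=\Cal I(B_k,B_k)$, since $\beta g\alpha=(\beta\pi_k)(\iota_k g\pi_k)(\iota_k\alpha)$), but that extra information is not needed for the lemma as stated.
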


\begin{proof}
Let $I$ be a maximal two-sided ideal of $E_{\oplus_{j=1}^n B_j}$ and let $\Cal I$ be the ideal of $\Cal E$ associated to $I$. Using \cite[Lemma~2.1(ii)]{FacPer1}, we get that for any $k=1,\dots,n$, either $\Cal I(B_k,B_k)=E_{B_k}$ or $\Cal I(B_k,B_k)$ is a maximal two-sided ideal of $E_{B_k}$. If $\Cal I(B_k,B_k)=E_{B_k}$ for all $k=1,\dots,t$, by the definition of associated ideal it follows that $\varepsilon_k \pi_k \in I$ for every $k=1,\dots,n$, where $\varepsilon_k \colon B_k\rightarrow \oplus_{j=1}^n B_j$ and $\pi_k\colon \oplus_{j=1}^n B_j \rightarrow B_k$ are the embedding and the canonical projection respectively. In particular $1_{\oplus_{j=1}^n B_j}=\sum_{k=1}^n \varepsilon_k \pi_k \in I$, which is a contradiction. It follows that there exists a triple $(k,a,b)$ such that $\Cal I(B_k,B_k)=I_{B_k,a,b}$ is a maximal right ideal of $E_{B_k}$. By \cite[Lemma~2.1(i)]{FacPer1}, $\Cal I_{B_k,a,b}=\Cal I$, so that, in particular, $I=\Cal I(\oplus_{j=1}^n B_j,\oplus_{j=1}^n B_j)=\Cal I_{B_k,a,b}(\oplus_{j=1}^n B_j,\oplus_{j=1}^n B_j)$.

Conversely,  let $(k,a,b)$ be a triple such that $k=1,\dots,n$, $a=m,e$, $b=l,u$ and $I_{B_k,a,b}$ is a maximal right ideal of $E_{B_k}$. By \cite[Lemma~2.1(ii)]{FacPer1}, either $\Cal I_{B_k,a,b}(\oplus_{j=1}^n B_j,\oplus_{j=1}^n B_j)$ is a maximal ideal of $E_{\oplus_{j=1}^n B_j}$ or $$\Cal I_{B_k,a,b}(\oplus_{j=1}^n B_j,\oplus_{j=1}^n B_j)=E_{\oplus_{j=1}^n B_j}.$$ In this second case, the identity of $\oplus_{j=1}^n B_j$ is in $\Cal I_{B_k,a,b}(\oplus_{j=1}^n B_j,\oplus_{j=1}^n B_j)$, so that the identity of $B_k$ is in $\Cal I_{B_k,a,b}(B_k,B_k)=I_{B_k,a,b}$, a contradiction.
\end{proof}

\begin{remark}\label{nuovo'}{\rm If $T$ is any semilocal ring and $M$ is a maximal two-sided ideal of~$T$, then there exists an element $\delta_M\in T$ such that $\delta_M\equiv 1_T\pmod{M}$ and $\delta_M\equiv 0\pmod{N}$ for every other maximal two-sided ideal $N$ of $T$ different from $M$. This follows from the fact that $T/J(T)$ is a direct product of finitely many simple rings.}\end{remark}

\begin{Prop}\label{parziale}
Let \begin{equation}\label{1'}
\xymatrix@1{ 0 \ar[r] &A_i \ar[r] &B_i \ar[r] &C_i \ar[r] &0,}\qquad i=1,2,\dots,n,
\end{equation}
and
\begin{equation}\label{2'}
\xymatrix@1{ 0 \ar[r] &A'_j \ar[r] &B'_j \ar[r] &C'_j \ar[r] &0,}\qquad j=1,2,\dots,m,
\end{equation}
be $n+m$ objects in the category $\Cal E$ with all the modules $A_i,C_i,A'_j,C'_j$ non-zero uniserial modules. For every $a=m,e$ and every $b=l,u$, set $$
X_{a,b}:=\{\,i\mid i=1,2,\dots,n,\ I_{B_i,a,b}\ \mbox{is a maximal ideal of }E_{B_i}\,\}
$$
and
$$
Y_{a,b}:=\{\,j\mid j=1,2,\dots,m,\ I_{B'_j,a,b}\ \mbox{is a maximal ideal of }E_{B'_i}\,\}.
$$
Then $\bigoplus_{i=1}^nB_i\cong\bigoplus_{j=1}^mB'_j$ in the category $\Cal E$ if and only if there exist four bijections $\varphi_{a,b}\colon X_{a,b}\to Y_{a,b}$, $a=m,e$ and $b=l,u$, with $[B_i]_{a,b}=[B'_{\varphi_{a,b}(i)}]_{a,b}$ for every $i\in X_{a,b}$.
\end{Prop}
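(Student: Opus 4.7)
The plan is to treat the two implications separately. For $(\Rightarrow)$ I pass to the factor categories $\Cal E/\Cal I_{B_k,a,b}$ attached to the various maximal ideals and count multiplicities of the nonzero image. For $(\Leftarrow)$ I mimic the isomorphism construction in the proof of Proposition~\ref{propiso} in matrix form, combining the morphisms $f^{(a,b)}_i$ and $g^{(a,b)}_i$ coming from the hypothesis $[B_i]_{a,b}=[B'_{\varphi_{a,b}(i)}]_{a,b}$ with CRT-style idempotents inside each $E_{B_i}$ and $E_{B'_j}$, and then check that the composition is invertible modulo the Jacobson radical of the semilocal ring $E_{\bigoplus_i B_i}$.

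For $(\Rightarrow)$, fix $(a,b)$ and any $k\in X_{a,b}$, and apply the canonical functor $F\colon\Cal E\to\Cal E/\Cal I_{B_k,a,b}$. By Corollaries~\ref{4.6} and \ref{cor4.8}(a), $F(B_i)\neq 0$ iff $i\in X_{a,b}$ and $[B_i]_{a,b}=[B_k]_{a,b}$, in which case $F(B_i)\cong F(B_k)$, and the analogous statement holds for the $B'_j$'s. Hence $F(\bigoplus_i B_i)\cong F(B_k)^t$ and $F(\bigoplus_j B'_j)\cong F(B_k)^{t'}$, where $t$ and $t'$ count those indices in $X_{a,b}$ and $Y_{a,b}$ respectively that lie in the same $(a,b)$-class as $k$. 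The given isomorphism descends to $F(B_k)^t\cong F(B_k)^{t'}$ in the factor category, so $t=t'$ by Corollary~\ref{4.7,5}. A symmetric argument using $\Cal I_{B'_{j_0},a,b}$ for $j_0\in Y_{a,b}$ shows that every $(a,b)$-class of $Y_{a,b}$ is realised in $X_{a,b}$ and vice versa. Matching the $(a,b)$-classes on $X_{a,b}$ and $Y_{a,b}$ block by block (they have equal cardinalities) yields the desired bijection $\varphi_{a,b}$.

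For $(\Leftarrow)$, for each $(a,b)\in\{m,e\}\times\{l,u\}$ and each $i\in X_{a,b}$ fix morphisms $f^{(a,b)}_i\colon B_i\to B'_{\varphi_{a,b}(i)}$ and $g^{(a,b)}_i\colon B'_{\varphi_{a,b}(i)}\to B_i$ in $\Cal E$ with $g^{(a,b)}_i f^{(a,b)}_i\notin I_{B_i,a,b}$ (Remark~\ref{1.1}). Using Remark~\ref{nuovo'} on the semilocal rings $E_{B_i}$ and $E_{B'_j}$ (Theorem~\ref{EARY}), choose $\varepsilon^{(a,b)}_i\in E_{B_i}$ and $\eta^{(a,b)}_j\in E_{B'_j}$ congruent to $1$ modulo $I_{B_i,a,b}$ (resp.\ $I_{B'_j,a,b}$) and to $0$ modulo every other maximal two-sided ideal. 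With $M:=\bigoplus_i B_i$ and $M':=\bigoplus_j B'_j$, set
\[
\Phi:=\sum_{(a,b)}\sum_{i\in X_{a,b}} \iota^{M'}_{\varphi_{a,b}(i)}\,\eta^{(a,b)}_{\varphi_{a,b}(i)}\,f^{(a,b)}_i\,\varepsilon^{(a,b)}_i\,\pi^M_i,
\]
and define $\Psi\colon M'\to M$ symmetrically using the $g^{(a,b)}_i$'s and $\varphi_{a,b}^{-1}$. By Lemma~\ref{nuovo} the maximal two-sided ideals of $E_M$ are the ideals $\Cal I_{B_k,a,b}(M,M)$; apply the quotient functor $F_\nu\colon\Cal E\to\Cal E/\Cal I_{B_k,a,b}$ to such a representative. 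By Corollary~\ref{4.6}(2)(c), $\Cal I_{B_k,a,b}(B_i,B_i)=I_{B_i,a,b}$ whenever $F_\nu(B_i)\neq 0$, so $F_\nu(\varepsilon^{(a,b)}_i)=1$ and $F_\nu(\varepsilon^{(c,d)}_i)=0$ for $(c,d)\neq(a,b)$, and similarly for the $\eta$'s. Hence $F_\nu(\Phi)$ collapses, under the bijection $\varphi_{a,b}$ between the nonzero summands of $F_\nu(M)$ and $F_\nu(M')$, to the diagonal map with entries $F_\nu(f^{(a,b)}_i)$; each such entry is an isomorphism in the factor category (whose endomorphism rings of nonzero objects are division rings by Proposition~\ref{4.7}) because $F_\nu(g^{(a,b)}_i f^{(a,b)}_i)\neq 0$. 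Thus $F_\nu(\Psi\Phi)$ is an automorphism of $F_\nu(M)$, i.e.\ $\Psi\Phi$ is invertible in $E_M/\Cal I_{B_k,a,b}(M,M)$ for every maximal two-sided ideal; since $E_M$ is semilocal, $\Psi\Phi$ is invertible in $E_M$. Symmetrically $\Phi\Psi$ is invertible in $E_{M'}$, so $\Phi$ is an isomorphism in $\Cal E$.

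The main obstacle is the factor-category computation for $F_\nu(\Phi)$: one must carefully verify that the CRT idempotents descend to $0$ or $1$ under $F_\nu$ uniformly in $i$ and that the quadruple sum in $F_\nu(\Psi\Phi)$ collapses to the single diagonal contribution at each nonzero summand. The key is Corollary~\ref{4.6}(2)(c), which forces $\Cal I_{B_k,a,b}(B_i,B_i)=I_{B_i,a,b}$ exactly on the relevant summands, making $\varepsilon^{(a,b)}_i$ pass to the identity and $\varepsilon^{(c,d)}_i$ (for $(c,d)\neq(a,b)$) to zero in the quotient; once this is in hand, the remaining step is the standard fact that an element of a semilocal ring is a unit iff its image is a unit in each simple factor of $E_M/J(E_M)$.
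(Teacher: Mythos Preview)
Your $(\Rightarrow)$ argument is correct and is essentially the paper's proof.

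Your $(\Leftarrow)$ argument contains a genuine gap. The claim that $F_\nu(\varepsilon^{(c,d)}_i)=0$ for every $(c,d)\neq(a,b)$ is false whenever two distinct pairs $(a,b)\neq(c,d)$ satisfy $I_{B_i,a,b}=I_{B_i,c,d}$ (Theorem~\ref{EARY} allows this; Example~5.4 gives an object with $I_{B,m,l}=I_{B,e,l}$). In that case $\varepsilon^{(c,d)}_i\equiv 1\pmod{I_{B_i,a,b}}$ by the very definition in Remark~\ref{nuovo'}, so both the $(a,b)$-term and the $(c,d)$-term for index $i$ survive under $F_\nu$, and $F_\nu(\Phi)$ need not be diagonal (the targets $\varphi_{a,b}(i)$ and $\varphi_{c,d}(i)$ may differ) nor invertible. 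A concrete failure: take $n=m=2$, $B_1=B_2=B'_1=B'_2=B$ with $E_B$ local and $I_{B,m,l}=I_{B,e,l}=J(E_B)$, choose all $f^{(a,b)}_i=g^{(a,b)}_i=1_B$, $\varphi_{m,l}=\mathrm{id}$, $\varphi_{e,l}=(1\,2)$. Then $F_\nu(\Phi)=\left(\begin{smallmatrix}1&1\\1&1\end{smallmatrix}\right)$ over the division ring $E_B/J(E_B)$, which is singular.

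The paper avoids this by placing the CRT idempotents in the \emph{big} ring $E_M$ and summing over the maximal two-sided ideals of $E_M$ (Lemma~\ref{nuovo}, Remark~\ref{nuovo'}), so that each maximal ideal contributes exactly one summand, regardless of how many pairs $(a,b)$ represent it. Your local construction can be repaired in the same spirit: for each $i$, sum only over the \emph{distinct} maximal ideals of $E_{B_i}$, choosing for each one a single representative pair $(a,b)$ with $I_{B_i,a,b}$ equal to that ideal. With that change your diagonal-collapse argument goes through, but as written the step ``$F_\nu(\varepsilon^{(c,d)}_i)=0$ for $(c,d)\neq(a,b)$'' does not hold.
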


\begin{proof}
$(\Rightarrow)$ Suppose there exists an isomorphism $\bigoplus_{i=1}^nB_i\cong\bigoplus_{j=1}^mB'_j$ in $\Cal E$. Fix $a=m,e$ and $b=l,u$. Apply Proposition~\ref{4.7} to any fixed object $B''$ of $\Cal U$ for which $I_{B'',a,b}\ \mbox{is a maximal ideal of }E_{B''_i}$. By Proposition~\ref{4.7}, the corresponding factor category $\Cal U/\Cal I_{B'',a,b}$ is simple. Also, either $[B_i]_{a,b}=[B'']_{a,b}$ or $[B_i]_{a,b}\ne[B'']_{a,b}$. By Corollary~\ref{4.6}, we have that either $F(B_i)\cong F(B'')$ or $F(B_i)=0$ in the category $\Cal U/\Cal I_{B'',a,b}$. Thus, in the category $\Cal E/\Cal I_{B'',a,b}$, we have that $F(\bigoplus_{i=1}^nB_i)\cong F(B'')^{t_{B'',a,b}}$, where $t_{B'',a,b}=|\{\,i\mid i=1,2,\dots,n,\ [B_i]_{a,b}=[B'']_{a,b}\,\}|$. Similarly, $F(\bigoplus_{j=1}^mB'_j)\cong F(B'')^{s_{B'',a,b}}$, where $s_{B'',a,b}=|\{\,j\mid i=1,2,\dots,m,\ [B_j]_{a,b}=[B'']_{a,b}\,\}|$. Then,  by Corollary~\ref{4.7,5}, $t_{B'',a,b}=s_{B'',a,b}$ for every object $B''$ of $\Cal U$  for which $I_{B'',a,b}\ \mbox{is a maximal ideal of }E_{B''_i}$. Thus there is a bijection $\varphi_{B'',a,b}$ between $\{\,i\mid i=1,2,\dots,n,\ [B_i]_{a,b}=[B'']_{a,b}\,\}$ and $\{\,j\mid i=1,2,\dots,m,\ [B_j]_{a,b}=[B'']_{a,b}\,\}$ for every $B''$ in $\Cal U$ and every $a,b$ with $I_{B'',a,b}$ a maximal ideal of $E_{B''}$. Now, for $[B_i]_{a,b}=[B'']_{a,b}$, $I_{B'',a,b}$ is a maximal ideal of $E_{B''}$ if and only if $I_{B_i,a,b}$ is a maximal ideal of $E_{B_i}$ (Corollary~\ref{cor4.8}~{\rm (a)}). Hence $\varphi_{B'',a,b}$ is a bijection between $\{\,i\mid i=1,2,\dots,n,\ [B_i]_{a,b}=[B'']_{a,b}$ and $I_{B_i,a,b}\ \mbox{is a maximal ideal of }E_{B_i}\,\}$ and $\{\,j\mid i=1,2,\dots,m,\ [B'_j]_{a,b}=[B'']_{a,b}$ and $I_{B'_j,a,b}\ \mbox{is a maximal ideal of }E_{B'_j}\,\}$ for every $B''$ in $\Cal U$.
Glueing together all these bijections $\varphi_{B'',a,b}$, we get the required bijection $\varphi_{a,b}\colon X_{a,b}\to Y_{a,b}$.

$(\Leftarrow)$ Conversely, suppose that four bijections $\varphi_{a,b}$  with the property in the statement of the Proposition exist. Fix any object $B''$ in $\Cal U$ with $I_{B'',a,b}$ a maximal ideal of $E_{B''}$.  Then the number of indices $i$ with $i=1,2,\dots,n,\ I_{B_i,a,b}$ a maximal ideal of $E_{B_i}$ and $[B_i]_{a,b}=[B'']_{a,b}$
is equal to the number of indices $j$ with $j=1,2,\dots,m,\ I_{B'_j,a,b}$ a maximal ideal of $E_{B'_i}$ and $[B'_j]_{a,b}=[B'']_{a,b}$. By Corollary~\ref{cor4.8}, the number of indices $i$ with $i=1,2,\dots,n$ and $[B_i]_{a,b}=[B'']_{a,b}$
is equal to the number of indices $j$ with $j=1,2,\dots,m$ and $[B'_j]_{a,b}=[B'']_{a,b}$. 
Call $n_{B'',a,b}$ this number. Let $\Cal I_{B'',a,b}$ be the ideal in the category $\Cal E$ associated to $I_{B'',a,b}$ and $F\colon\Cal E\to\Cal E/\Cal I_{B'',a,b}$ be the canonical functor. Then, for every short exact sequence $B$ in $\Cal U$, $F(B)=0$ if $[B]_{a,b}\ne[B'']_{a,b}$, and $F(B)\cong F(B'')$ if $[B]_{a,b}=[B'']_{a,b}$ (Corollary~\ref{4.6}). Thus $F(\bigoplus_{i=1}^nB_i)\cong F(B'')^{n_{B'',a,b}}$ and $F(\bigoplus_{j=1}^mB'_j)\cong F(B'')^{n_{B'',a,b}}$, so that $F(\bigoplus_{i=1}^nB_i)\cong F(\bigoplus_{j=1}^mB'_j)$ in $\Cal E/\Cal I_{B'',a,b}$. In particular, this occurs for all the short exact sequences $B''\in\{B_1,\dots,B_n,B'_1,\dots,B'_m\}$ and the pairs $(a,b)$ with $I_{B'',a,b}$ is a maximal ideal of $E_{B''}$. 

From the isomorphism $F(\bigoplus_{i=1}^nB_i)\cong F(B'')^{n_{B'',a,b}}$, we get that the ring\linebreak $\End_{\Cal E/\Cal I_{B'',a,b}}(F(\bigoplus_{i=1}^nB_i))$ is isomorphic to the ring $M_{n_{B'',a,b}}(E_{B''}/I_{B'',a,b})$ of all $n_{B'',a,b}\times n_{B'',a,b}$ matrices with entries in the division ring $E_{B''}/I_{B'',a,b}$. This ring of matrices is a simple artinian ring. Thus the canonical functor $F$ induces a surjective ring morphism $\End_{\Cal E}(\bigoplus_{i=1}^nB_i)\to \End_{\Cal E/\Cal I_{B'',a,b}}(F(\bigoplus_{i=1}^nB_i))$ onto a simple ring, and its kernel is therefore a maximal two-sided ideal of $\End_{\Cal E}(\bigoplus_{i=1}^nB_i)$. This kernel is $\Cal I_{B'',a,b}(\bigoplus_{i=1}^nB_i,\bigoplus_{i=1}^nB_i)$, which is therefore a maximal two-sided ideal of the semilocal ring $\End_{\Cal E}(\bigoplus_{i=1}^nB_i)$ (cf.~Lemma~\ref{nuovo}). By Remark~\ref{nuovo'}, there exists an endomorphism $\delta_{[B'']_{a,b}}$ of $\bigoplus_{i=1}^nB_i$ such that $\delta_{[B'']_{a,b}}\equiv 1$ modulo this maximal two-sided ideal  and $\delta_{[B'']_{a,b}}$ belongs to all the other maximal two-sided ideals of 
$\End_{\Cal E}(\bigoplus_{i=1}^nB_i)$. Similarly, we find endomorphisms $\delta'_{[B'']_{a,b}}$ of $\bigoplus_{j=1}^mB_j'$.

Since $F(\bigoplus_{i=1}^nB_i)\cong F(\bigoplus_{j=1}^mB'_j)$, there exists a morphism $f_{B'',a,b}\colon \bigoplus_{i=1}^nB_i\to\bigoplus_{j=1}^mB'_j$ in $\Cal E$, which becomes an isomorphism in $\Cal E/\Cal I_{B'',a,b}$. Consider the morphism $f:=\sum\delta'_{[B'']_{a,b}}f_{B'',a,b}\delta_{[B'']_{a,b}}\colon \bigoplus_{i=1}^nB_i\to\bigoplus_{j=1}^mB'_j$, where the sum ranges over the maximal two-sided ideals in the endomorphism ring (Lemma~\ref{nuovo}), so that the number of summands in the definition of $f$ is equal to the number of maximal two-sided ideals of $E_{\bigoplus_{i=1}^nB_i}\cong E_{\bigoplus_{j=1}^mB'_j}$. Thus $f$ becomes an isomorphism in the factor category $\Cal E/\Cal I_{B'',a,b}$ for any triple $(B'',a,b)$ such that $B''\in\{B_1,\dots,B_n,B'_1,\dots,B'_m\}$, $a\in\{m,e\}$, $b\in\{l,u\}$ and $I_{B'',a,b}$ is a maximal ideal of $E_{B''}$. By
\cite[Proposition 5.2]{Adel}, $f$ is an  isomorphism in the category $\Cal E/\Cal I$, where $\Cal I$ is the intersection of these finitely many ideals $\Cal I_{B'',a,b}$. The restriction of $\Cal I$ to the full additive subcategory $\Cal S:=\add(B_1\oplus\dots\oplus B'_m)$ of $\Cal E$ is the Jacobson radical $\Cal J(\Cal S)$ of $\Cal S$. Hence $\bigoplus_{i=1}^nB_i$ and $\bigoplus_{j=1}^mB'_j$ are isomorphic in the category $\Cal E/\Cal I$. Thus there are morphisms $\alpha\colon \bigoplus_{i=1}^nB_i\to\bigoplus_{j=1}^mB'_j$ and $\beta\colon\bigoplus_{j=1}^mB'_j\to\bigoplus_{i=1}^nB_i$ in $\Cal E$ such that $\alpha\beta\equiv 1$ and $\beta\alpha\equiv1$ modulo $\Cal I$. Equivalently, the endomorphisms $\alpha\beta$ and $\beta\alpha$ in $\Cal S$ are congruent to $1$ modulo $J(\End_{\Cal S}(\bigoplus_{j=1}^mB'_j))$ and $J(\End_{\Cal S}(\bigoplus_{i=1}^nB_i))$, respectively. Thus $\alpha\beta$ and $\beta\alpha$ are invertible in the rings $\End_{\Cal S}(\bigoplus_{j=1}^mB'_j)$ and $\End_{\Cal S}(\bigoplus_{i=1}^nB_i)$, respectively. In particular, $\alpha$ is both right invertible and left invertible in the category $\Cal E$. It follows that $\alpha$ is an isomorphism in $\Cal E$.
\end{proof}

\begin{remark} In the statement of the previous proposition, we have used the condition ``$I_{B,a,b}\ \mbox{is a maximal ideal of }E_{B}$''. This condition can be expressed very easily in terms relative to $B$. For instance, suppose we want to express the condition 
$I_{B,m,l}\ \mbox{is a maximal ideal of }E_{B}$. By Theorem~\ref{EARY}, the ideals $I_{B,m,l},I_{B,e,l},I_{B,m,u}$ and $I_{B,e,u}$ are completely prime ideals, and the maximal ideals of $E_B$ are some of them. Thus, by Remark~\ref{rem}, 
$I_{B,m,l}\ \mbox{is a maximal ideal of }E_{B}$ if and only if $I_{B,m,l}\not\subseteq I_{B,m,u} \cup I_{B,e,l} \cup I_{B,e,u}$, that is, if and only if there exists an endomorphism $f$ of (\ref{1}) with $\ker(f)\cap A\ne 0$, $f(A)= A$,
$A= f^{-1}(A)$ and $f(B)+A= B$.
\end{remark}

In the final result of this section, we will make use of some techniques, notations and ideas taken from \cite{DF}. If $X$ and $Y$ are finite disjoint sets, we will denote by $D(X,Y;E)$ the bipartite
digraph (= directed graph) having $X$ and $Y$ as disjoint sets of non-adjacent vertices and $E$ as set of
edges. Equivalently, $V =X\cup Y$ is the vertex set of $D(X,Y;E)$, $ E\subseteq X \times Y \cup Y \times X$ is the set
of its edges, and $X \cap Y =\emptyset$. For every subset $T\subseteq V$, let $N^+(T) =\{\,w\in V\mid (v,w)\in E$ 
for some $v\in T\,\}$ be the {\em out-neighborhood} of $T$ (\cite[introduction, p.184]{DF}). Define an equivalence relation $\sim_s$ on
$V$ by $v\sim_s w$ if there are both an oriented path from $v$ to $w$ and an oriented path from $w$ to $v$ ($v,w\in V$).

\begin{Prop}\label{AL} (\cite[Lemma~2.1]{DF}, Krull-Schmidt Theorem for bipartite digraphs). Let $X$ and $Y$ be disjoint
sets of cardinality $n$ and $m$, respectively. Set $V :=X\cup Y$. Let $D = D(X,Y ; E)$
be a bipartite digraph having $X$ and $Y$ as disjoint sets of non-adjacent vertices. If
$|T|\le|N^+(T)|$ for every subset $T$ of $V$, then $n = m$ and, after a suitable relabeling of the indices of the elements $x_1,\dots,x_n$ of $X$ and  $y_1,\dots,y_n$ of $Y$, $x_i \sim_s y_i$ for every $i = 1,\dots,n$.\end{Prop}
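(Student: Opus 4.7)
The plan is to deduce this from Hall's Marriage Theorem applied twice, once in each direction, and then combine the two resulting matchings into a disjoint union of directed cycles.

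First I would observe that since $D$ is bipartite with non-adjacent parts $X$ and $Y$, every edge goes from $X$ to $Y$ or from $Y$ to $X$. Consequently, for $T\subseteq X$ the set $N^+(T)$ is contained in $Y$, and for $T\subseteq Y$ the set $N^+(T)$ is contained in $X$. Taking $T=X$ yields $n=|X|\le|N^+(X)|\le|Y|=m$, and taking $T=Y$ gives the reverse inequality, so $n=m$. This already handles the cardinality claim.

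Next I would extract two perfect matchings. Restricting to $T\subseteq X$, the hypothesis $|T|\le|N^+(T)|$ is exactly Hall's condition for the bipartite graph whose edges are the arrows $X\to Y$, so there is a perfect matching $M_1=\{(x_i,\sigma(i))\mid i=1,\dots,n\}$ with $\sigma$ a bijection $\{1,\dots,n\}\to\{1,\dots,n\}$ and each $(x_i,y_{\sigma(i)})\in E$. Applying the same argument to $T\subseteq Y$ yields a second perfect matching $M_2=\{(y_j,x_{\tau(j)})\mid j=1,\dots,n\}$ using arrows $Y\to X$.

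Then I would form the directed multigraph $H$ on $V$ whose arcs are $M_1\cup M_2$. By construction every vertex of $H$ has out-degree exactly $1$ and in-degree exactly $1$, so $H$ decomposes uniquely into vertex-disjoint directed cycles. Because $D$ is bipartite and the arcs of $M_1$ go $X\to Y$ while those of $M_2$ go $Y\to X$, each such cycle has even length and alternates between vertices of $X$ and vertices of $Y$; in particular each cycle contains equally many $x$'s and $y$'s, and every vertex on a given cycle is $\sim_s$-equivalent to every other vertex on the same cycle (the cycle itself gives the required oriented paths in both directions, reading it around once).

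Finally, within each cycle I would pair the $x$'s and $y$'s arbitrarily (say by traversing the cycle and matching each $x_i$ to the $y$ that immediately follows it via $M_1$). Taking the union over all cycles gives a bijection between $X$ and $Y$ realising $x_i\sim_s y_i$ after relabeling. The only place that requires care is verifying that Hall's condition really does apply separately to $X$ and to $Y$, which is immediate from the bipartite hypothesis as noted above; I expect the rest of the argument to be routine.
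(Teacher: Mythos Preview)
Your argument is correct. The paper does not supply a proof of this proposition; it merely quotes it as \cite[Lemma~2.1]{DF}. Your route---Hall's condition in each direction to get $n=m$ and two perfect matchings, then decomposing the union of the two matchings into alternating directed cycles whose vertices are pairwise $\sim_s$-equivalent in $D$---is exactly the standard proof of this lemma (and is the argument given in \cite{DF}). The only cosmetic point is that in writing $M_1=\{(x_i,\sigma(i))\mid\dots\}$ you presumably mean $(x_i,y_{\sigma(i)})$, which you clarify immediately afterward; otherwise nothing needs adjusting.
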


\begin{Prop}\label{completo} Let \begin{equation}
\label{1'x}
\xymatrix@1{ 0 \ar[r] &A_i \ar[r] &B_i \ar[r] &C_i \ar[r] &0,}\qquad i=1,2,\dots,n,
\end{equation}
and
\begin{equation}\label{2x'}
\xymatrix@1{ 0 \ar[r] &A'_j \ar[r] &B'_j \ar[r] &C'_j \ar[r] &0,}\qquad j=1,2,\dots,m,
\end{equation}
be $n+m$ objects in the category $\Cal E$ with all the modules $A_i,C_i,A'_j,C'_j$ non-zero uniserial modules. Then $\bigoplus_{i=1}^nB_i\cong\bigoplus_{j=1}^mB'_j$ in the category $\Cal E$ if and only if $n=m$ and there exist four permutations $\varphi_{a,b}$ of $\{1,2,\dots,n\}$, $(a,b)\in\{m,e\}\times\{l,u\}$, with $[B_i]_{a,b}=[B'_{\varphi_{a,b}(i)}]_{a,b}$ for every $i=1,2,\dots,n$, every $a=m,e$ and every $b=l,u$.\end{Prop}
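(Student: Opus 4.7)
The ``if'' direction is a straightforward reduction to Proposition~\ref{parziale}: the class equality $[B_i]_{a,b}=[B'_{\varphi_{a,b}(i)}]_{a,b}$ together with Corollary~\ref{cor4.8}(a) guarantees that the given permutation $\varphi_{a,b}$ carries $X_{a,b}$ into $Y_{a,b}$, and a cardinality count then forces the restriction $\varphi_{a,b}|_{X_{a,b}}\colon X_{a,b}\to Y_{a,b}$ to be a bijection with the required class-matching property; Proposition~\ref{parziale} then produces the direct-sum isomorphism in $\Cal E$.

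For the ``only if'' direction, the plan is to translate the isomorphism of direct sums into the combinatorial input demanded by Proposition~\ref{AL}. I would first invoke Proposition~\ref{parziale} to obtain partial bijections $\varphi_{a,b}\colon X_{a,b}\to Y_{a,b}$ satisfying $[B_i]_{a,b}=[B'_{\varphi_{a,b}(i)}]_{a,b}$. Using these I would construct the bipartite digraph $D(X,Y;E)$ on $X=\{x_1,\dots,x_n\}$ and $Y=\{y_1,\dots,y_m\}$, declaring $(x_i,y_j)$ (and symmetrically $(y_j,x_i)$) to be an edge whenever $j=\varphi_{a,b}(i)$ for some $(a,b)$ with $i\in X_{a,b}$. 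The intention is to verify the hypothesis $|T|\le|N^+(T)|$ of Proposition~\ref{AL} for every $T\subseteq X\cup Y$, yielding both $n=m$ and a relabeling in which $x_i\sim_s y_i$ for every $i$.

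The Hall-type inequality is the main technical obstacle, and here the factor-category apparatus from Section~4 is decisive. For each maximal ideal $\Cal I_{B_k,a,b}$ of $\Cal E$ the quotient $\Cal E/\Cal I_{B_k,a,b}$ is simple (Proposition~\ref{4.7}), with $F(\bigoplus_i B_i)\cong F(B_k)^{t_k}$ for $t_k=|\{i:[B_i]_{a,b}=[B_k]_{a,b}\}|$ by Corollary~\ref{4.6}, and analogously $F(\bigoplus_j B'_j)\cong F(B_k)^{s_k}$; the given isomorphism together with Corollary~\ref{4.7,5} forces $t_k=s_k$. These multiplicity equalities, combined with the bijections $\varphi_{a,b}$ and the identity $|X_{a,b}|=|Y_{a,b}|$, imply $|N^+(T)\cap Y|\ge|T|$ for $T\subseteq X$, and symmetrically for $T\subseteq Y$; mixed subsets $T=T_X\cup T_Y$ split into the two cases. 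With the hypothesis verified, Proposition~\ref{AL} delivers $n=m$ and the $\sim_s$-matching.

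It remains to extend each $\varphi_{a,b}$ from $X_{a,b}$ to a permutation of $\{1,\dots,n\}$ preserving the $(a,b)$-class. For $i\notin X_{a,b}$ the completely prime ideal $I_{B_i,a,b}$ is not maximal in $E_{B_i}$, so by Theorem~\ref{EARY} and Remark~\ref{rem} it is contained in some maximal $I_{B_i,c,d}$; then $\varphi_{c,d}(i)$ lies outside $Y_{a,b}$ by Lemma~\ref{4.2}(a) and Corollary~\ref{cor4.8}(a), and it satisfies $[B_i]_{a,b}=[B'_{\varphi_{c,d}(i)}]_{a,b}$ by Lemma~\ref{4.2}(b). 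A coherent choice of auxiliary pairs $(c,d)$ then makes the extended $\varphi_{a,b}$ bijective on the complement $\{1,\dots,n\}\setminus X_{a,b}$; this last bookkeeping can be set up as a Hall marriage problem on the complementary sets, whose Hall condition is, once again, exactly the multiplicity identity supplied by the factor-category count.
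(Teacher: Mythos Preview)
Your ``if'' direction is correct and coincides with the paper's argument.

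For the ``only if'' direction your route differs substantially from the paper's, and the final extension step has a genuine gap. The paper does not pass through Proposition~\ref{parziale} here. After obtaining $n=m$ in one line from the Goldie dimension of $\bigoplus_i A_i\cong\bigoplus_j A'_j$, it treats each pair $(a,b)$ \emph{separately} and builds a bipartite digraph whose edges record a concrete module-theoretic property of the components of the given isomorphism $\alpha$ and its inverse $\beta$: for example, for $(e,u)$ there is an edge $B_h\to B'_k$ precisely when the induced map $\overline{\pi'_k\alpha\varepsilon_h}\colon C_h\to C'_k$ is surjective. The Hall inequality is then verified directly by exploiting uniseriality (a finite family of non-surjective maps into a uniserial module has proper joint image) and a dual Goldie dimension count on an induced surjection. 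Proposition~\ref{AL} then produces the full permutation $\varphi_{a,b}$ of $\{1,\dots,n\}$ at once, with no extension step needed.

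The gap in your argument is precisely that extension. To extend $\varphi_{a,b}$ to $\{1,\dots,n\}\setminus X_{a,b}$ you need, for every $(a,b)$-class $v$ with $I_{\cdot,a,b}$ non-maximal, the equality $|\{\,i\notin X_{a,b}:[B_i]_{a,b}=v\,\}|=|\{\,j\notin Y_{a,b}:[B'_j]_{a,b}=v\,\}|$. You assert that this is ``exactly the multiplicity identity supplied by the factor-category count'', but Corollaries~\ref{4.6} and~\ref{4.7,5} only equate multiplicities of $(c,d)$-classes for which $I_{\cdot,c,d}$ is \emph{maximal}. Lemma~\ref{4.2}(b) does show that such a $(c,d)$-class refines the $(a,b)$-class whenever $I_{B,a,b}\subseteq I_{B,c,d}$, but the choice of the covering maximal $(c,d)$ can vary from one $i$ to another inside a single $(a,b)$-class: Lemma~\ref{4.2}(a) only transfers inclusions \emph{into} $I_{\cdot,a,b}$ across the $(a,b)$-class, not inclusions \emph{out of} it. Hence there is no way to recover the $(a,b)$-multiplicities from the available $(c,d)$-multiplicities, and the Hall condition for your complementary marriage problem is unjustified. (A similar objection applies to your step deriving $n=m$ via Proposition~\ref{AL}: the Hall inequality for the combined digraph is asserted rather than proved, and in any case $n=m$ is immediate from Goldie dimension.)
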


\begin{proof} 
Suppose $\bigoplus_{i=1}^nB_i\cong\bigoplus_{j=1}^mB'_j$ in the category $\Cal E$. Then $\bigoplus_{i=1}^nA_i\cong\bigoplus_{j=1}^mA'_j$ in the category $\Mod R$, so that, taking the Goldie dimension, we get that $n=m$. Let $\alpha\colon  \bigoplus_{i=1}^nB_i \rightarrow \bigoplus_{j=1}^mB'_j$ be an isomorphism in $\Cal E$ with inverse $\beta\colon  \bigoplus_{j=1}^m B'_j \rightarrow \bigoplus_{i=1}^n B_i$. Denote by $\varepsilon_h\colon B_h\rightarrow \bigoplus_{i=1}^nB_i$, $\pi_h\colon  \bigoplus_{i=1}^nB_i \rightarrow B_h$, $\varepsilon'_k\colon B'_k\rightarrow \bigoplus_{j=1}^nB'_j$ and $\pi'_k\colon  \bigoplus_{j=1}^nB'_j \rightarrow B'_k$ the embeddings and the canonical projections and consider the composite morphisms $\chi_{h,k}:=\pi'_k\alpha \varepsilon_h\colon B_h\rightarrow B'_k$ and $\chi'_{k,h}: =\pi_h\beta\varepsilon'_k\colon B'_k\rightarrow B_h$. We will prove the existence of the permutation $\varphi_{e,u}$. The proof of the existence of the other three permutations $\varphi_{a,b}$ is similar. Define a bipartite digraph $D=D(X,Y;E)$ having $X=\{B_1,\dots,B_n\}$ and $Y=\{B'_1,\dots,B'_n\}$ as disjoint sets of non-adjacent vertices, and the set $E$ of edges defined as follows: one edge from $B_h$ to $B'_k$ for each $h$ and $k$ such that the induced mapping $\overline{\chi_{h,k}}\colon  C_h\rightarrow C'_k$ is surjective, and one edge from $B'_k$ to $B_h$ for each $h$ and $k$ such that the induced mapping $\overline{\chi'_{k,h}}\colon C'_k\rightarrow C_h$ is surjective. We want to show that $|T|\leq |N^+(T)|$ for every subset $T\subseteq X\cup Y$ of vertices. Since the digraph is bipartite, we can suppose that $T\subseteq X$. If $r=|T|$ and $s=|N^+(T)|$, relabeling the indices we may assume that $T=\{B_1,\dots, B_r\}$ and $N^+(T)=\{B'_1,\dots,B'_s\}$. This means that the induced morphisms $\overline{\chi_{h,k}}$ are not surjective for every $h=1,\dots, r$  and every $k=s+1,\dots, n$. Since the modules $C'_k$ are all uniserial, we have that $L_k:=\sum_{h=1}^r \overline{\chi_{h,k}}(C_h)\subset C'_k$ for every $k=s+1,\dots,n$, and therefore  the quotient modules $C'_k/L_k$ are non-zero for every $k=s+1,\dots,n$. Let $\pi\colon  \bigoplus_{j=1}^n C'_j \rightarrow \bigoplus_{k=s+1}^n C'_k/L_k$ be the canonical projection. For every $h=1,\dots, r$ and for every $k=s+1,\dots,n$, the composite morphism
$$
C_h\overset{\overline{\varepsilon_h}}{\longrightarrow} \bigoplus_{i=1}^n C_i
\overset{\overline{\alpha}}{\longrightarrow} \bigoplus_{j=1}^n C'_j
\overset{\overline{\pi'_k}}{\longrightarrow} C'_k \rightarrow C'_k/L_k
$$
is zero because $\overline{\pi'_k}\overline{\alpha}\overline{\varepsilon_h}(C_h)=\overline{\chi_{h,k}}(C_h)\subseteq L_k$. It follows that, for every $h=1,\dots,r$, $\overline{\varepsilon_h}(C_h)$ is contained in the kernel of the composite mapping
$$
\bigoplus_{i=1}^n C_i
\overset{\overline{\alpha}}{\longrightarrow} \bigoplus_{j=1}^n C'_j
\overset{\overline{\pi'_k}}{\longrightarrow} C'_k \rightarrow C'_k/L_k
$$
for every $k=s+1,\dots,n$. Since $\sum_{h=1}^r \overline{\varepsilon_h}(C_h)=\bigoplus_{h=1}^r C_h$, it follows that there exists a morphism $\bigoplus_{i=1}^n C_i/\bigoplus_{h=1}^r C_h\cong \bigoplus_{\ell=r+1}^n C_\ell \rightarrow C'_k/L_k$ making the following diagram
$$
\xymatrix{
\bigoplus_{i=1}^n C_i \ar[r] \ar[d]_{\overline{\alpha}}
 &  \bigoplus_{\ell=r+1}^n C_\ell \ar[d] \\
\bigoplus_{j=1}^n C'_j \ar[r] & C'_k/L_k
}
$$
commute for every $k=s+1,\dots, n$. Therefore there exist a morphism $$\gamma\colon  \bigoplus_{\ell=r+1}^n C_\ell \rightarrow \bigoplus_{k=s+1}^n C'_k/L_k$$ and a commutative diagram 
$$
\xymatrix{
\bigoplus_{i=1}^n C_i \ar[r] \ar[d]_{\overline{\alpha}} &  \bigoplus_{\ell=r+1}^n C_\ell \ar[d]^{\gamma} \\
\bigoplus_{j=1}^n C'_j \ar[r]^-{\pi} & \bigoplus_{k=s+1}^n C'_k/L_k.
}
$$ Since the horizontal arrows are the canonical projections and $\overline{\alpha}$ is an isomorphism, the morphism $\gamma$ must be surjective. Taking the dual Goldie dimension of the domain and the codomain of $\gamma$, we get that $n-r\geq n-s$, that is, $|T|=r\leq s=|N^+(T)|$, as we wanted to prove. Now apply Proposition~\ref{AL} to the digraph $D$. This concludes the proof of one of the implications in the statement of our proposition.

Conversely, assume that $n=m$ and there exist four permutations $\varphi_{a,b}$ of $\{1,2,\dots,n\}$, $(a,b)\in\{m,e\}\times\{l,u\}$, with $[B_i]_{a,b}=[B'_{\varphi_{a,b}(i)}]_{a,b}$ for every $i=1,2,\dots,n$, every $a=m,e$ and every $b=l,u$.
By Corollary~\ref{cor4.8} {\rm (a)}, $\varphi_{a,b}(X_{a,b})=Y_{a,b}$. Thus $\varphi_{a,b}$ restricts to a bijection $\varphi'_{a,b}$ of $X_{a,b}$ onto $Y_{a,b}$. We can now apply Proposition~\ref{parziale}, getting that $\bigoplus_{i=1}^nB_i$ and $\bigoplus_{j=1}^mB'_j$ are isomorphic objects in the category $\Cal E$.
\end{proof}

\section{Examples}

\begin{example}
Let $R$ be a ring having two non-isomorphic simple right $R$-modules $S$ and $S'$. Consider the following two objects of $\Cal U$:
$$
\xymatrix@1{ 0 \ar[r] & S \ar[r]^-{\varepsilon_1} & S\oplus S' \ar[r]^-{\pi_1} &S' \ar[r] &0}
$$
and
$$
\xymatrix@1{ 0 \ar[r] &S' \ar[r]^-{\varepsilon_2} &S\oplus S' \ar[r]^-{\pi_2} &S \ar[r] &0,}
$$
where $\varepsilon_i$ and $\pi_j$ are the embeddings and the canonical projections, respectively. These two objects have the same endomorphism ring in $\Cal E$:
$$
E_B=
\left( \begin{array}{cc}
\End_R(S) & 0 \\
0 & \End_R(S')
\end{array} \right)
=E_{B'}.
$$
Moreover, the maximal right ideals of $E_B=E_{B'}$ are:
$$
I_{B,m,u}=I_{B,e,u}=
\left( \begin{array}{cc}
\End_R(S) & 0 \\
0 & 0
\end{array} \right)
=I_{B',m,l}=I_{B',e,l}
$$
and
$$
I_{B,m,l}=I_{B,e,l}=
\left( \begin{array}{cc}
0 & 0 \\
0 & \End_R(S')
\end{array} \right)
=I_{B',m,u}=I_{B',e,u}.
$$
So, for any fixed $(a,b)\in\{m,e\}\times\{l,u\}$, $I_{B,a,b}$ is a maximal right ideal of $E_B$ and $I_{B',a,b}$ is a maximal ideal of $E_{B'}$, but $[B]_{a,b}\neq[B']_{a,b}$, because otherwise $S$ and $S'$ would be isomorphic, contrary to the hypothesis. In particular, the two objects are not isomorphic in $\Cal E$ (Proposition \ref{propiso}). Notice that, in the notation of  Proposition \ref{parziale}, the unique bijections $\varphi_{a,b}\colon X_{a,b}\rightarrow Y_{a,b}$ do not satisfy the ``compatibility'' conditions $[B_i]_{a,b}=[B'_{\varphi_{a,b}(i)}]_{a,b}$.
\end{example}

\begin{example}
Let $R$ be a ring and let $U,V$ be two non-zero uniserial right $R$-modules. Consider the following object of $\Cal U$:
$$
\xymatrix@1{ 0 \ar[r] & U \ar[r]^-{\varepsilon} & U\oplus V \ar[r]^-{\pi} &V \ar[r] &0,}
$$
where $\varepsilon\colon u \mapsto (u,0)$ is the embedding and $\pi\colon(u,v)\mapsto v$ is the canonical projection. Its endomorphism ring in $\Cal E$ is
$$
E_B=
\left( \begin{array}{cc}
\End_R(U) & 0 \\
\Hom_R(V,U) & \End_R(V)
\end{array} \right).
$$
For any endomorphism $f=\left(\begin{smallmatrix} f_{1,1} & 0\\ f_{2,1}& f_{2,2} \end{smallmatrix}\right) \in E_B$, we have $f\mid^U_U=f_{1,1}$ and $\overline{f}=f_{2,2}$. So, using the notation at the beginning of Section \ref{UM}, $f \in I_{B,m,l}$ (resp. $f \in I_{B,e,l}$) if and only if $f_{1,1} \in I_U$ (resp. $f_{1,1} \in K_U$). Similarly, $f \in I_{B,m,u}$ (resp. $f \in I_{B,e,u}$) if and only if $f_{2,2} \in I_V$ (resp. $f_{2,2} \in K_V$). In particular, the type of $E_B$ is exactly the sum of the types of $\End_R(U)$ and $\End_R(V)$. Therefore, choosing suitable uniserials $R$-modules, we can produce objects of $\Cal U$ whose endomorphism ring has exactly 2, 3 or 4 maximal right ideals. 
\end{example}

\begin{example}  In \cite[Example 2.1]{TAMS}, the second author constructed examples of uniserial right $R$-modules $U_{i,j}$ to show that a module that is a direct sum of $n$ uniserial modules can have $n!$ pair-wise non-isomorphic direct-sum decompositions into indecomposables. All those modules $U_{i,j}$ have exactly two maximal ideals and we will now show that they are extensions of two uniserial modules with local endomorphism ring. The construction of the ring $R$ and the uniserial right $R$-modules $U_{i,j}$ was the following.

Let ${\bf M}_n({\Q})$ be the ring of all $n\times n$-matrices over 
the field $\Q$ of rational numbers. Let $\Z$ be the ring of integers and let
${\Z}_p,{\Z}_q$ be
the localizations of $\Z$
at two distinct maximal ideals $(p)$ and $(q)$ of $\Z$ (here $p,q\in{\Z}$ are distinct prime
numbers).
Let $\Lambda_p$ denote the subring of ${\bf M}_n({\Q})$ whose elements are the $n\times n$-matrices with
entries in ${{\Z}_p}$ on and above the diagonal
and entries in ${p{\Z}_p}$ 
under the diagonal, that is,
$$\Lambda_p:=\cmat{cccc} {{\Z}_p}&{{\Z}_p}&\dots&{{\Z}_p}\\
p{{\Z}_p}&{{\Z}_p}&\dots&{{\Z}_p}\\
\vdots& &\ddots&\\
p{{\Z}_p}&p{{\Z}_p}&\dots&{{\Z}_p}
\fmat\subseteq {\bf M}_n({\Q}).
$$
Similarly, set
$$\Lambda_q:=\cmat{cccc} {{\Z}_q}&{{\Z}_q}&\dots&{{\Z}_q}\\ q{{\Z}_q}&{{\Z}_q}&\dots&{{\Z}_q}\\
\vdots& &\ddots&\\
q{{\Z}_q}&q{{\Z}_q}&\dots&{{\Z}_q}
\fmat\subseteq {\bf M}_n({\Q}).
$$  
If $$R:=\cmat{cc}\Lambda_p&0\\
{\bf M}_n({\Q})&\Lambda_q\fmat,$$ then $R$ is a subring of the ring
${\bf M}_{2n}({\Q})$ of $2n\times 2n$-matrices with rational entries.

For every $i=1,2,\dots,n$, set
$$
V_i:=(
\underbrace{{\Q},\dots,{\Q}}_{n},
\underbrace{q{\Z}_{{q}},\dots,q{\Z}_{{q}}}_{i-1},
\underbrace{{\Z}_{{q}},\dots,{\Z}_{{q}}}_{n-i+1})
$$
and 
$$
X_i:=(
\underbrace{p{\Z}_{{p}},\dots,p{\Z}_{p}}_{i-1},
\underbrace{{\Z}_{{p}},\dots,{\Z}_{{p}}}_{n-i+1},
\underbrace{0,\dots,0}_{n}).
$$

Then $V_i$ and the $X_j$ are right $R$-modules with right $R$-module structure given by matrix multiplication. Moreover, the unique infinite composition series of $V_1$ is
\begin{align*}
&V_1\supset V_2\supset \dots\supset V_n\supset qV_1\supset \dots \supset q V_n \\&
\supset q^2V_1 \supset \dots \supset q^2 V_n \supset 
\dots \supset p^{-2}X_1\supset \dots\supset p^{-2}X_n
\\&
\supset p^{-1}X_1\supset \dots\supset p^{-1}X_n 
\supset X_1\supset X_2\supset \dots\supset X_n \\&
\supset pX_1 \supset \dots pX_n \supset p^2 X_1 \supset \dots p^2X_n \supset \dots \supset 0
\end{align*}

The right $R$-modules $X_i/X_{i+1}$ and $V_i/V_{i+1}$ are simple. Define the $n^2$ uniserial $R$-modules $U_{i,j}:=V_j/X_i$. Now notice that $$X:=(
\underbrace{{\Q},\dots,{\Q}}_{n},
\underbrace{0,\dots,0}_{n})=\bigcap_{m\ge0}q^mV_j=\sum_{m\ge0}p^{-m}X_i$$ for every $i,j=1,2,\dots,n$. Thus we have a short exact sequence \begin{equation}\label{P}
\xymatrix@1{ 0 \ar[r] &X/X_i \ar[r] &U_{i,j}=V_j/X_i \ar[r] &V_j/X \ar[r] &0.}
\end{equation}

The right $R$-module $X/X_i $ is an artinian uniserial module whose lattice of submodules is order isomorphic to the ordinal number $\omega+1$, where $\omega$ denotes the first infinite ordinal. Since $X/X_i$ is an artinian uniserial $R$-module, its endomorphism ring $\End(X/X_i )$ is a local ring whose maximal ideal consist of all the endomorphisms of $X/X_i$ that are zero on the socle of $X/X_i$.

The right $R$-module $V_j/X$ is a noetherian uniserial module, whose lattice of submodules is order antiisomorphic to the ordinal number $\omega+1$. The cyclic uniserial module $V_j/X$ is annihilated by the two-sided ideal $I:=\left(\begin{smallmatrix} \Lambda_p&0\\
{\bf M}_n({\Q})&0 \end{smallmatrix}\right)$ of $R$, so that it is a module over $R/I\cong\Lambda_q$. In particular, $\End_R(V_j/X)=\End_{\Lambda_q}(V_j/X)$. As a module over $\Lambda_q$, $V_j/X$ turns out to be a cyclic projective uniserial module, hence $V_j/X\cong e\Lambda_q$ for some idempotent $e\in\Lambda_q$. Thus $\End_R(V_j/X)=\End_{\Lambda_q}(V_j/X)\cong\End_{\Lambda_q}(e\Lambda_q)\cong e\Lambda_qe\cong \Z_q$ is a local ring.

Finally, every endomorphism of the $R$-module $U_{i,j}:=V_j/X_i$ maps $X/X_i $ into $X/X_i $, i.e., $X/X_i $ is a fully invariant submodule of $V_j/X_i$, because $X/X_i $ is the Loewy submodule of $V_j/X_i$. Thus the endomorphism ring of the extension
(\ref{P}) in the category $\Cal E$ is canonically isomorphic to the endomorphism ring of the right $R$-module $U_{i,j}$, which is a ring of type $2$.
\end{example}

\begin{example}
Let $R=\Z$ be the ring of integers and let $p\in \Z$ be a prime. Consider the following object of $\Cal U$:
$$
\xymatrix@1{ 0 \ar[r] & \Z/ p\Z \ar[r]^{\iota} & \Z(p^{\infty}) \ar[r]^{\beta} & \Z(p^{\infty}) \ar[r] &0,}
$$
where $\Z(p^{\infty})$ denotes the Pr\"ufer group and $\beta$ is any surjective endomorphism of $\Z(p^{\infty})$ having its kernel equal to $\soc(\Z(p^{\infty}))\cong \Z/ p\Z$. It is easy to check that $I_{B,e,u}\subseteq 
I_{B,m,u}\subseteq I_{B,m,l}=I_{B,e,l}$, and therefore $E_B$ has exactly one maximal right ideal.
\end{example}

\section{Main Theorem}

Let $\Cal U_0$ denote the full subcategory of $\Cal E$ whose objects are of the form
\begin{equation}
\xymatrix@1{ 0 \ar[r] &0 \ar[r] &C' \ar[r] &C \ar[r] &0,}
\end{equation}
for some non-zero uniserial right $R$-module $C$. Similarly, define $\Cal U^0$ to be the full subcategory of $\Cal E$ whose objects are of the form
\begin{equation}
\xymatrix@1{ 0 \ar[r] &A \ar[r] &A' \ar[r] &0 \ar[r] &0,}
\end{equation}
for some non-zero uniserial right $R$-module $A$.

\begin{remark}\label{rema} A short exact sequence \begin{equation}
\xymatrix@1{ 0 \ar[r] &A \ar[r]^\alpha &B \ar[r]^\beta &C \ar[r] &0 } \label{O}
\end{equation} splits if and only if it is the direct sum in the category $\Cal E$ of two short exact sequences \begin{equation}\xymatrix@1{ 0 \ar[r] &A \ar[r]^\gamma &A' \ar[r] &0 \ar[r] &0 }\label{O'}
\end{equation}  and \begin{equation}
\xymatrix@1{ 0 \ar[r] &0 \ar[r] &C' \ar[r] &C \ar[r] &0. }\label{O''}
\end{equation}  

Now suppose we have a split short exact sequence (\ref{O}) with both $A$ and $C$ non-zero modules. The relations between the $(a,b)$-class of (\ref{O}) and the $(a,b)$-classes of (\ref{O'}) and (\ref{O''}) are as follows:

(1) The $(a,b)$-classes of (\ref{O'}) are $[A']_{m,l}\ne [0]_{m,l}$, $[A']_{e,l}\ne [0]_{e,l}$, $[0]_{m,u}$, $ [0]_{e,u}$.

(2) The $(a,b)$-classes of (\ref{O''}) are $[0]_{m,l}$, $ [0]_{e,l}$, $[C']_{m,u}\ne[0]_{m,u}$, $[C']_{e,u}\ne [0]_{e,u}$.

(3) The $(a,b)$-classes of (\ref{O}) are $[B]_{m,l}= [A']_{m,l}$, $[B]_{e,l}= [A']_{e,l}$, $[B]_{m,u}=[C']_{m,u}$, $[B]_{e,u}= [C']_{e,u}$.
\end{remark}

\begin{Lemma}\label{splits} The following conditions are equivalent for an object
 $(\ref{O})$ of $\Cal U$:
\begin{enumerate}
\item[{\rm (a)}]
The short exact sequence $(\ref{O})$ splits.
\item[{\rm (b)}]
There exists an object $(\ref{O'})$ of $\Cal U^0$ such that $[B]_{a,l}=[A']_{a,l}$ for both $a=m,e$.
\item[{\rm (c)}]
There exists an object $(\ref{O''})$ of $\Cal U_0$ such that $[B]_{a,u}=[C']_{a,u}$ for both $a=m,e$.
\end{enumerate}
\end{Lemma}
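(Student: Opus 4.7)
The plan is as follows. The implications (a)$\Rightarrow$(b) and (a)$\Rightarrow$(c) are immediate from Remark~\ref{rema}: splitness of $(\ref{O})$ amounts to a decomposition $(\ref{O})\cong(\ref{O'})\oplus(\ref{O''})$ with $(\ref{O'})\in\Cal U^0$ and $(\ref{O''})\in\Cal U_0$, and parts~(1)--(3) of that remark then read off the four class-equalities required in (b) and (c). So the real content of the lemma lies in (b)$\Rightarrow$(a) and its mirror image (c)$\Rightarrow$(a).

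For (b)$\Rightarrow$(a) I first unwind the definitions. In any object $(\ref{O'})$ of $\Cal U^0$ the arrow $\alpha'\colon A\to A'$ is an isomorphism; identifying $A'$ with $A$ via $\alpha'$, a morphism $f\colon(\ref{O})\to(\ref{O'})$ in $\Cal E$ is simply an $R$-linear map $f\colon B\to A$, with induced map on the $A$-parts equal to $f\circ\alpha\in\End(A)$. Morphisms $(\ref{O'})\to(\ref{O})$ in $\Cal E$ must factor through $\alpha$, and one checks that their defining conditions on the $A$-parts are automatically satisfied. Reading off the hypotheses, $[B]_{m,l}=[A']_{m,l}$ yields $f_1\in\Hom_R(B,A)$ with $f_1\circ\alpha\notin I_A$, and $[B]_{e,l}=[A']_{e,l}$ yields $f_2\in\Hom_R(B,A)$ with $f_2\circ\alpha\notin K_A$.

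The key step is then the following. The set $L:=\{\,f\circ\alpha\mid f\in\Hom_R(B,A)\,\}$ is a left ideal of $\End(A)$, since $\psi(f\circ\alpha)=(\psi f)\circ\alpha$, and by the previous paragraph $L\not\subseteq I_A$ and $L\not\subseteq K_A$. The two-ideal case of prime avoidance, essentially the argument of Remark~\ref{rem} but which for $n=2$ requires only that the sets involved be additive subgroups, therefore furnishes $u\in L\setminus(I_A\cup K_A)$. Since $A$ is uniserial, $I_A\cup K_A$ is precisely the set of non-units of $\End(A)$, so $u$ is an automorphism of~$A$. Writing $u=f\circ\alpha$, the composition $u^{-1}\circ f\colon B\to A$ is an $R$-linear retraction of $\alpha$, and $(\ref{O})$ splits.

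The implication (c)$\Rightarrow$(a) is entirely dual: now $\beta''\colon C'\to C$ is an isomorphism, morphisms $(\ref{O''})\to(\ref{O})$ in $\Cal E$ are arbitrary $R$-linear maps $g\colon C\to B$, and the induced map on quotients is $\beta\circ g\in\End(C)$. The same avoidance argument applied to the right ideal $\{\,\beta\circ g\mid g\in\Hom_R(C,B)\,\}$ of $\End(C)$ produces $v:=\beta\circ g$ an automorphism of~$C$, and $g\circ v^{-1}$ is then a section of $\beta$. The only genuinely non-trivial ingredient of the proof is this avoidance step, and it is exactly what forces us to use the monogeny and epigeny hypotheses together: either one alone would give only $L\not\subseteq I_A$ or only $L\not\subseteq K_A$ (and similarly for $C$), but not both, and thus would not yield an invertible element in~$L$.
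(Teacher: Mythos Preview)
Your proof is correct and follows essentially the same route as the paper: both arguments extract from the hypotheses morphisms $f,g\colon B\to A'$ whose restrictions to $A$ are respectively injective and surjective, then use a two-subgroup avoidance step to produce a morphism whose restriction is an automorphism, yielding a retraction of $\alpha$. The only cosmetic difference is packaging---the paper cites \cite[Lemma~1.4(a)]{TAMS} directly (one of $f|^A_A$, $g|^A_A$, $(f+g)|^A_A$ is an isomorphism), whereas you phrase the same fact as prime avoidance for the left ideal $L=\{\,f\alpha\mid f\in\Hom_R(B,A)\,\}$ against $I_A$ and $K_A$.
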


\begin{proof}
(a)$\Rightarrow$(b) and (a)$\Rightarrow$(c) easily follow from Remark~\ref{rema}.

(b)$\Rightarrow$(a) By hypothesis, there exist $f,g\colon B\rightarrow A'$ such that $f|^A_{A}$ is injective and $g|^A_{A}$ is surjective. From \cite[Lemma 1.4 (a)]{TAMS}, one of $f|^A_{A}$, $g|^A_{A}$ or $(f+g)|^A_{A}$ is an isomorphism. So we can find a morphism $h\colon B\rightarrow A'$ in $\Cal E$ for which $h|^A_{A}$ is an isomorphism. Now it is immediate to see that $(h|^A _{A})^{-1} \gamma^{-1} h \alpha=id_A$, where $\gamma\colon  A\rightarrow A'$ is the isomorphism in the short exact sequence $(\ref{O'})$. This means that (\ref{O}) splits.
Similarly for (c)$\Rightarrow$ (a). 
\end{proof}

\begin{theorem}\label{completo'} Let
\begin{equation}\label{1''x}
\xymatrix@1{ 0 \ar[r] &A_i \ar[r] &B_i \ar[r] &C_i \ar[r] &0,}\qquad i=1,2,\dots,n,
\end{equation}
and
\begin{equation}\label{2x''}
\xymatrix@1{ 0 \ar[r] &A'_j \ar[r] &B'_j \ar[r] &C'_j \ar[r] &0,}\qquad j=1,2,\dots,m,
\end{equation}
be $n+m$ non-zero objects in the category $\Cal E$ with all the modules $A_i,C_i,A'_j,C'_j$ uniserial modules. 
Set $$\begin{array}{l}X_{l}:=\{\,i\mid i=1,2,\dots,n,\ A_i\ne0\,\},\\
X_{u}:=\{\,i\mid i=1,2,\dots,n,\ C_i\ne0\,\},\\
X'_{l}:=\{\,j\mid j=1,2,\dots,m,\ A'_j\ne0\,\},\\
X'_{u}:=\{\,j\mid j=1,2,\dots,m,\ C'_j\ne0\,\}.\end{array}$$
Then $\bigoplus_{i=1}^nB_i\cong\bigoplus_{j=1}^mB'_j$ in the category $\Cal E$ if and only if there exist four bijections $\varphi_{a,b}\colon X_{b}\to X'_{b}$, $a=m,e$ and $b=l,u$, such that $[B_i]_{a,b}=[B'_{\varphi_{a,b}(i)}]_{a,b}$ for every $a=m,e$, $b=l,u$ and $i\in X_{b}$.
\end{theorem}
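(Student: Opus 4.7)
My plan is to extend the two-sided strategy of Proposition~\ref{completo} to handle summands whose $A$-part or $C$-part vanishes. For the direction $(\Rightarrow)$ I will use the bipartite digraph argument; for the direction $(\Leftarrow)$ I will reduce to Proposition~\ref{completo} by splitting off the $\Cal U^0$- and $\Cal U_0$-pieces via Lemma~\ref{splits} and Remark~\ref{rema}.

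For $(\Rightarrow)$, fix $(a,b)\in\{m,e\}\times\{l,u\}$ and suppose $\alpha\colon\bigoplus_{i=1}^nB_i\to\bigoplus_{j=1}^mB'_j$ is an isomorphism in $\Cal E$ with inverse $\beta$. Form the composites $\chi_{h,k}=\pi'_k\alpha\varepsilon_h\colon B_h\to B'_k$ and $\chi'_{k,h}=\pi_h\beta\varepsilon'_k\colon B'_k\to B_h$, and build a bipartite digraph on the disjoint vertex sets $X_b$ and $X'_b$: put an edge $h\to k$ (resp.\ $k\to h$) whenever the map induced by $\chi_{h,k}$ (resp.\ $\chi'_{k,h}$) on the $A$-part (for $b=l$) or on the $C$-part (for $b=u$) is injective (when $a=m$) or surjective (when $a=e$). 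Indices in $\{1,\dots,n\}\setminus X_b$ and $\{1,\dots,m\}\setminus X'_b$ contribute nothing on the relevant side, so the Hall-type inequality $|T|\le |N^+(T)|$ can be established by the same Goldie/dual-Goldie dimension count that appears in the proof of Proposition~\ref{completo} for the pair $(e,u)$. Applying Proposition~\ref{AL} to each of the four digraphs produces the four bijections $\varphi_{a,b}$.

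For $(\Leftarrow)$, I first preprocess using Lemma~\ref{splits}: every split extension in $\Cal U$ appearing as a summand on either side is rewritten as the direct sum of a $\Cal U^0$-object and a $\Cal U_0$-object; by Remark~\ref{rema} this leaves all four $(a,b)$-classes of the overall direct sum unchanged. After this preprocessing each summand is of one of three types: non-split in $\Cal U$ (Type~I), in $\Cal U^0$ (Type~II, $C_i=0$), or in $\Cal U_0$ (Type~III, $A_i=0$). The key observations are that Type~II indices $i\in X_l\setminus X_u$ satisfy $[B_i]_{a,u}=[0]_{a,u}$, that Type~III indices $i\in X_u\setminus X_l$ satisfy $[B_i]_{a,l}=[0]_{a,l}$, and that for Type~II objects the $(a,l)$-classes coincide with the monogeny, resp.\ epigeny, classes of the $A_i$'s (dually for Type~III and the $C_i$'s). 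Combining the four bijections and using Corollary~\ref{cor4.8}, one refines the $\varphi_{a,b}$ so that they respect the Type~I/II/III partition; then the Type~I parts on the two sides match via Proposition~\ref{completo}, while the Type~II and Type~III pieces match via the classical classification \cite{TAMS} of direct sums of uniserials applied to the $A$-parts and $C$-parts respectively. Taking the direct sum of these three isomorphisms yields the required isomorphism in $\Cal E$.

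The main obstacle is the type-preserving refinement: \emph{a priori} $\varphi_{m,l}$ could send a Type~II index on the left to a Type~I index on the right, and one must argue that such mismatches can be resolved by invoking the other three bijections together with additional applications of Lemma~\ref{splits}, so that in the end both sides share identical counts of each type and the refined bijections match classes type by type.
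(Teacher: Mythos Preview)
Your $(\Rightarrow)$ direction is correct and in fact cleaner than the paper's. Since the isomorphism $\alpha$ restricts to an isomorphism $\bigoplus_{i\in X_l}A_i\to\bigoplus_{j\in X'_l}A'_j$ and induces an isomorphism $\bigoplus_{i\in X_u}C_i\to\bigoplus_{j\in X'_u}C'_j$, the bipartite digraph argument of Proposition~\ref{completo} runs verbatim on the vertex sets $X_b$ and $X'_b$; the indices outside $X_b$ contribute only zero modules on the relevant side, so the Goldie/dual-Goldie count is unaffected. The paper instead routes this direction through Proposition~\ref{completo} by first padding with copies of a simple module $S$, which is unnecessary here.

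Your $(\Leftarrow)$ direction, however, has a genuine gap, and it is exactly the one you flag. The type-preserving refinement need not exist: nothing in the hypothesis forces the two sides to have the same number of non-split Type~I summands after your preprocessing. Concretely, one can have on the left a non-split $B_1\in\Cal U$ together with a Type~II object $B_2$, and on the right two Type~II objects $B'_1,B'_2$ and one Type~III object $B'_3$, arranged so that $\varphi_{m,l}$ and $\varphi_{e,l}$ both exist but send the index of $B_1$ to \emph{different} Type~II targets. Lemma~\ref{splits} only tells you that $B_1$ splits when a \emph{single} $\Cal U^0$-object matches both its $(m,l)$- and $(e,l)$-classes simultaneously; two separate matches coming from two different bijections do not trigger it. So your plan of ``match Type~I via Proposition~\ref{completo}, match Types~II and III via \cite{TAMS}'' cannot get off the ground without a further, non-trivial argument showing that the four bijections together force equal type counts and compatible refinements. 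You have not supplied that argument, and it is not clear it can be done without essentially redoing the work.

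The paper sidesteps this entirely. Rather than trying to separate types, it \emph{merges} them: it pairs each $\Cal U^0$-summand with a $\Cal U_0$-summand to form a split object of $\Cal U$, and pads any leftover $\Cal U_0$- (or $\Cal U^0$-) summands with the sequence $0\to S\to S\to 0\to 0$ for a fixed simple $S$, so that every summand on both sides now lies in $\Cal U$. Remark~\ref{rema} guarantees that the multisets of non-zero $(a,b)$-classes survive this padding (the only new classes introduced are $[S]_{a,l}$, and they appear $t$ times on each side). Proposition~\ref{completo} then gives $\bigoplus B_i\oplus S^t\cong\bigoplus B'_j\oplus S^t$ in $\Cal E$, and $S$ cancels because its $\Cal E$-endomorphism ring is a division ring (Evans' cancellation, \cite[Theorem~4.5]{libro}, which holds in any additive category). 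This trick is the missing idea in your $(\Leftarrow)$: it replaces the delicate combinatorial refinement by an application of cancellation.
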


\begin{proof}
($\Rightarrow$) Suppose $\bigoplus_{i=1}^nB_i\cong\bigoplus_{j=1}^mB'_j$ in the category $\Cal E$. Then $\bigoplus_{i=1}^nA_i\cong\bigoplus_{j=1}^mA'_j$ and $\bigoplus_{i=1}^nC_i\cong\bigoplus_{j=1}^mC'_j$ in $\Mod R$. Taking the Goldie dimensions of these two pairs of isomorphic modules, we get that $|X_l|=|X'_l|$ and $|X_u|=|X'_u|$. Set $r:=|X_l|=|X'_l|$ and $s:=|X_u|=|X'_u|$. Suppose for instance $r\le s$, so that $r+t=s$ for some integer $t\ge0$. The set $\{1,2,\dots,n\}$ is partitioned into three pair-wise disjoint subsets $X_l\cap X_u$, $X_l\setminus X_u$ and $X_u\setminus X_l$, so that $\{1,2,\dots,n\}=(X_l\cap X_u)\dot{\cup}(X_l\setminus X_u)\dot{\cup}(X_u\setminus X_l)$. Then $|X_l\setminus X_u|+t=|X_u\setminus X_l|$. Now set $u:=|X_l\setminus X_u|$, so that $|X_l\cap X_u|=n-2u-t$. Let $i_1,\dots,i_u$ be the elements of $X_l\setminus X_u$ and fix $u$ elements $k_1,\dots,k_u$ in $X_u\setminus X_l$. In the direct sum $\bigoplus_{i=1}^nB_i$, replace the $2u$ sequences 
\begin{equation}\label{11}
\xymatrix@1{ 0 \ar[r] &A_{i_\ell} \ar[r] &B_{i_\ell} \ar[r] &0\ar[r] &0,}\qquad \ell=1,\dots,u,
\end{equation}
and
\begin{equation}\label{12}
\xymatrix@1{ 0 \ar[r] &0 \ar[r] &B_{k_\ell} \ar[r] &C_{k_\ell} \ar[r] &0,}\qquad \ell=1,\dots,u,
\end{equation}
with the $u$ split sequences 
\begin{equation}\label{13}
\xymatrix@1{ 0 \ar[r] &A_{i_\ell} \ar[r] &B_{i_\ell}\oplus B_{k_\ell} \ar[r] &C_{k_\ell}\ar[r] &0,}\qquad \ell=1,\dots,u.
\end{equation} In each of these replacements, the $(a,b)$-classes $[B_{i_\ell}]_{m,l}\ne [0]_{m,l}$, $[B_{i_\ell}]_{e,l}\ne [0]_{e,l}$, $[0]_{m,u}$, $ [0]_{e,u}$ of (\ref{11}) and 
$[0]_{m,l}$, $ [0]_{e,l}$, $[B_{k_\ell}]_{m,u}\ne[0]_{m,u}$, $[B_{k_\ell}]_{e,u}\ne [0]_{e,u}$  of (\ref{12}) are replaced with the
 $(a,b)$-classes $[B_{i_\ell}\oplus B_{k_\ell}]_{m,l}= [B_{i_\ell}]_{m,l}$, $[B_{i_\ell}\oplus B_{k_\ell}]_{e,l}= [B_{i_\ell}]_{e,l}$, $[B_{i_\ell}\oplus B_{k_\ell}]_{m,u}=[B_{k_\ell}]_{m,u}$, $[B_{i_\ell}\oplus B_{k_\ell}]_{e,u}= [B_{k_\ell}]_{e,u}$  of (\ref{13}). That is, only some zero $(a,b)$-classes are deleted in the replacement.
 
Now fix a simple right $R$-module $S$ and replace each of the remaining $t$ sequences indexed in $(X_u\setminus X_l)\setminus\{k_1,\dots,k_u\}$ with the direct sum of the sequence and the sequence $\xymatrix@1{ 0 \ar[r] &S \ar[r] &S\ar[r] &0\ar[r] &0}$. After these substitutions, the object $(\bigoplus_{i=1}^nB_i)\oplus S^t$ of $\Cal E$ is the direct sum of the $n-2u-t$ sequences (\ref{1''x}) indexed in $X_l\cap X_u$ and $u+t$ split sequences. All these $n-u$ summands are objects of $\Cal U$.

Similarly, the object $(\bigoplus_{j=1}^mB'_j)\oplus S^t$ of $\Cal E$ can be written as the direct sum of $m-2u'-t$ sequences indexed in $X'_l\cap X'_u$, plus $u'$ split sequences of the form
 \begin{equation}
\xymatrix@1{ 0 \ar[r] &A'_{j_v} \ar[r] &B'_{j_v}\oplus B'_{j'_v} \ar[r] &C'_{j'_v}\ar[r] &0,}\qquad v=1,\dots,u',
\end{equation} 
plus $t$ split sequences of the form  $\xymatrix@1{ 0 \ar[r] &S \ar[r] &B'_j \oplus S\ar[r] &C'_j \ar[r] &0}$.

Moreover, $\bigoplus_{i=1}^nB_i\cong\bigoplus_{j=1}^mB'_j$ implies that $(\bigoplus_{i=1}^nB_i)\oplus S^t\cong(\bigoplus_{j=1}^mB'_j)\oplus S^t$ in $\Cal E$, so that we can apply Proposition~\ref{completo} to this isomorphism. Thus, fix $a=m,e$ and any short exact sequence
\begin{equation}
\xymatrix@1{ 0 \ar[r] &U \ar[r] &V \ar[r] &W\ar[r] &0}\label{V}
\end{equation} in $\Cal U$. Then the number of indices $i\in X_l\cap X_u$ with $[B_i]_{a,l}=[V]_{a,l}$ plus the number of indices $\ell=1,\dots,u$ with $[B_{i_\ell}\oplus B_{k_\ell}]_{a,l}=[V]_{a,l}$ plus the number of indices $i\in (X_u\setminus X_l)\setminus\{k_1,\dots,k_u\}$ such that $[B_i\oplus S]_{a,l}=[V]_{a,l}$ is equal to the number of indices $j\in X'_l\cap X'_u$ with $[B'_j]_{a,l}=[V]_{a,l}$ plus the number of indices $v=1,\dots,u'$ with $[B'_{j_v}\oplus B'_{k'_v}]_{a,l}=[V]_{a,l}$ plus the number of indices $j\in (X'_u\setminus X'_l)\setminus\{k'_1,\dots,k'_{u'}\}$ such that $[B'_j\oplus S]_{a,l}=[V]_{a,l}$. From Remark~\ref{rema}, it follows that the number of indices $i\in X_l\cap X_u$ with $[B_i]_{a,l}=[V]_{a,l}$ plus the number of indices $\ell=1,\dots,u$ with $[B_{i_\ell}]_{a,l}=[V]_{a,l}$ plus the number of indices $i\in (X_u\setminus X_l)\setminus\{k_1,\dots,k_u\}$ such that $[S]_{a,l}=[V]_{a,l}$ (that is, $t$ if $[S]_{a,l}=[V]_{a,l}$ and $0$ if $[S]_{a,l}\ne [V]_{a,l}$) is equal to the number of indices $j\in X'_l\cap X'_u$ with $[B'_j]_{a,l}=[V]_{a,l}$ plus the number of indices $v=1,\dots,u'$ with $[B'_{j_v}]_{a,l}=[V]_{a,l}$ plus the number of indices $j\in (X'_u\setminus X'_l)\setminus\{k'_1,\dots,k'_{u'}\}$ with $[S]_{a,l}=[V]_{a,l}$ (that is, $t$ if $[S]_{a,l}=[V]_{a,l}$ and $0$ if $[S]_{a,l}\ne [V]_{a,l}$).

This is true for every sequence (\ref{V}) in $\Cal U$, so that, by Remark~\ref{rema}, it is also true for every sequence 
(\ref{V}) in $\Cal U^0$. Notice that, for every sequence 
(\ref{V}) in $\Cal U$ or $\Cal U^0$, one has that $[B'_j]_{a,l}=[V]_{a,l}$ implies $A'_j\ne 0$. Therefore, the number of indices $i=1,2,\dots,n$ with $[B_i]_{a,l}=[V]_{a,l}$ is equal to the number of indices $j=1,2,\dots,m$ with $[B'_j]_{a,l}=[V]_{a,l}$.

Similarly for the case $r\ge s$ and the $(a,b)$-classes with $b=u$. 

($\Leftarrow$) Suppose there exist four bijections $\varphi_{a,b}\colon X_{b}\to X'_{b}$, $a=m,e$ and $b=l,u$, such that $[B_i]_{a,b}=[B'_{\varphi_{a,b}(i)}]_{a,b}$ for every $a=m,e$, $b=l,u$ and $i\in X_{b}$. Set $r:=|X_l|=|X'_l|$ and $s:=|X_u|=|X'_u|$. Suppose $r\le s$, so that $r+t=s$ for some integer $t\ge0$. The set $\{1,2,\dots,n\}$ is partitioned into three pair-wise disjoint subsets $X_l\cap X_u$, $X_l\setminus X_u$ and $X_u\setminus X_l$, so that $\{1,2,\dots,n\}=(X_l\cap X_u)\dot{\cup}(X_l\setminus X_u)\dot{\cup}(X_u\setminus X_l)$. Then $|X_l\setminus X_u|+t=|X_u\setminus X_l|$. Now set $u:=|X_l\setminus X_u|$, so that $|X_l\cap X_u|=n-2u-t$. Let $i_1,\dots,i_u$ be the elements of $X_l\setminus X_u$ and fix $u$ elements $k_1,\dots,k_u$ in $X_u\setminus X_l$. Fix a simple right $R$-module $S$ and consider the sequence \begin{equation}\xymatrix@1{ 0 \ar[r] &S \ar[r] &S\ar[r] &0\ar[r] &0.}\label{00}\end{equation} Let $i_1,\dots,i_u$ be the elements of $X_l\setminus X_u$ and fix $u$ elements $k_1,\dots,k_u$ in $X_u\setminus X_l$. Like in the proof of the previous implication, we have that, of the $n$ sequences $(\ref{1''x})$, $n-2u-t$ are in $\Cal U$, $u$ are in $\Cal U_0$, and $u+t$ are in $\Cal U^0$. Replace the $n$ sequences $(\ref{1''x})$ with $n-u$ sequences, which are:

(1) the $n-2u-t$ sequences $(\ref{1''x})$ that are in $\Cal U$;

(2) $u$ sequences that are the direct sum of the sequences $(\ref{1''x})$ in $\Cal U_0$ plus $u$ of the sequences $(\ref{1''x})$  in $\Cal U^0$;

(3) $t$  sequences that are the direct sum of the remaining $t$ sequences $(\ref{1''x})$  in $\Cal U^0$ plus the sequence (\ref{00}).

\noindent These new $n-u$ sequences are indexed in $X_u$ and are in $\Cal U$. 

The $(a,b)$-classes of the new $n-u$ sequences with their multiplicity are the same as the non-zero $(a,b)$-classes of the old $n$ sequences $(\ref{1''x})$ for $b=u$, and are  the same as the non-zero $(a,b)$-classes of the old $n$ sequences $(\ref{1''x})$ plus
the $(a,b)$-class $[S]_{a,b}$ with multiplicity $t$ for
for $b=l$.

Similarly for the $m$ sequences (\ref{2x''}). We can replace them with $m-u'$ sequences, which are:

(1) the $m-2u'-t$ sequences $(\ref{2x''})$ that are in $\Cal U$;

(2) $u'$ sequences that are the direct sum of the sequences $(\ref{2x''})$ in $\Cal U_0$ plus $u'$ of the sequences $(\ref{2x''})$ in $\Cal U^0$;

(3) $t$  sequences that are the direct sum of the remaining $t$ sequences $(\ref{2x''})$  in $\Cal U^0$ plus the sequence (\ref{00}).

\noindent These $m-u'$ sequences are indexed in $X'_u$ and are all in $\Cal U$. 
The $(a,b)$-classes of these new $m-u'$ sequences with their multiplicity are the same as the non-zero $(a,b)$-classes of the old $m$ sequences $(\ref{2x''})$ for $b=u$, and are  the same as the non-zero $(a,b)$-classes of the old $m$ sequences $(\ref{2x''})$ plus
the $(a,b)$-class $[S]_{a,b}$ with multiplicity $t$ for
for $b=l$.

The existence of the four bijections $\varphi_{a,b}\colon X_{b}\to X'_{b}$, $a=m,e$ and $b=l,u$ with $[B_i]_{a,b}=[B'_{\varphi_{a,b}(i)}]_{a,b}$ for every $a=m,e$, $b=l,u$ and $i\in X_{b}$ implies that $|X_u |=|X'_u|$, that is, $n-u=m-u'$, and that there are four bijections between the new $n-u$ sequences and the new $m-u'$ sequences that preserve the $(a,b)$-classes. Since all the new sequences are in $\Cal U$, we can apply Proposition~\ref{completo} to the new $n-u+m-u'$ sequences, getting that $\bigoplus_{i=1}^nB_i\oplus S^t\cong\bigoplus_{j=1}^mB'_j\oplus S^t$ in the category $\Cal E$. Now the sequence $S$ cancels from direct sums in $\Cal E$ because its endomorphism ring is a division ring \cite[Theorem 4.5]{libro}. (Notice that Evans' result is stated in \cite[Theorem 4.5]{libro} only for modules, but its proof shows that it holds in any additive category. Moreover, the endomorphism ring of the sequence (\ref{00}) in the category $\Cal E$ is the endomorphism ring of the simple module $S$, which is a division ring.) Thus 
$\bigoplus_{i=1}^nB_i\cong\bigoplus_{j=1}^mB'_j$ in $\Cal E$. Similarly for the case $r\ge s$, making use of the exact sequence $\xymatrix@1{ 0 \ar[r] &0 \ar[r] &S\ar[r] &S\ar[r] &0}$. 
\end{proof}

The authors are very grateful to the referee of the paper, who discovered a mistake hidden in the proof of one of the implications of Proposition~\ref{completo}.

\bibliographystyle{amsalpha}

\end{document}